\definecolor{my-red}{rgb}{0.5,0.0,0.0}
\definecolor{my-blue}{rgb}{0.0,0.0,0.6}
\definecolor{my-green}{rgb}{0.0,0.5,0.0}
\definecolor{light-gray}{gray}{0.6}
	\newcommand{\one}{\mathds{1}}
\newcommand{\sig}{{\scaleobj{1.1}{\diamond}}} 
	\newcommand\Barabove[1]{\overset{\rule{2ex}{.075ex}}{#1}}
    \newcommand\barabove[1]{\overset{\rule{1.3ex}{.075ex}}{#1}}
\numberwithin{equation}{section}
\newcommand{\graybrace}[2]{
	\begingroup
	\color{gray}
	\underbrace{\color{black}#1}_{\substack{#2}}
	\endgroup
}
\newcommand{\eq}[1]{\begin{linenomath}\postdisplaypenalty=0\begin{align*} #1 \end{align*}\end{linenomath}}
\NewDocumentCommand{\eeq}{om}{\begin{linenomath}\postdisplaypenalty=0\begin{align} \IfNoValueTF{#1}{}{\tag{#1}} \begin{split} #2 \end{split} \end{align}\end{linenomath}}
\newcommand{\stackref}[2]{
\readlist*\mylist{#1}
\stackrel{\mbox{\footnotesize\foreachitem\x\in\mylist[]{\ifnum\xcnt=1\else,\fi\eqref{\x}}}}{#2}
}
\newcommand{\stackrefp}[2]{
\readlist*\mylist{#1}
\stackrel{\hphantom{\mbox{\footnotesize\foreachitem\x\in\mylist[]{\ifnum\xcnt=1\else,\fi\eqref{\x}}}}}{#2}
}
\newcommand{\stackrefpp}[3]{
\readlist*\mylist{#1}
\readlist*\mylistt{#2}
\stackrel{\parbox{\widthof{\footnotesize\foreachitem\x\in\mylistt[]{\ifnum\xcnt=1\else,\fi\eqref{\x}}}}{\centering\footnotesize\foreachitem\x\in\mylist[]{{\ifnum\xcnt=1\else,\fi\eqref{\x}}}}}{#3}
}
\def\eps{\varepsilon}
\newcommand{\E}{\mathbb{E}}
\renewcommand{\P}{\mathbb{P}}
\newcommand{\R}{\mathbb{R}}
\renewcommand{\S}{\mathbb{S}}
\newcommand{\Z}{\mathbb{Z}}
\newcommand{\CC}{\mathcal{C}}
\newcommand{\HH}{\mathcal{H}}
\newcommand{\II}{\mathcal{I}}
\newcommand{\ZZ}{\mathcal{Z}}
\newcommand{\PPP}{\mathscr{P}}
\newcommand{\SSS}{\mathscr{S}}
\newcommand*{\QED}{\hfill\ensuremath{\square}}
\newcommand{\iprod}[3][]{#1\langle #2,\, #3 #1\rangle}
\newcommand{\vc}[1]{{\boldsymbol #1}}
\newcommand{\bx}{\boldsymbol{x}}
\newcommand{\bq}{\boldsymbol{q}}
\newcommand{\bone}{\boldsymbol{1}}
\newcommand{\la}{\lambda}
\newcommand{\sx}{\sig}
\newcommand{\wt}[1]{\widetilde{#1}}
\newcommand{\wh}[1]{\widehat{#1}}
\DeclareMathOperator{\GSE}{GSE}
\DeclareMathOperator{\e}{e} 
\newcommand{\cc}{\mathsf{c}} 
\newcommand{\dd}{\mathrm{d}} 
            \DeclareFontFamily{OMX}{MnSymbolE}{}
            \DeclareSymbolFont{MnLargeSymbols}{OMX}{MnSymbolE}{m}{n}
            \DeclareFontShape{OMX}{MnSymbolE}{m}{n}{
                <-6>  MnSymbolE5
               <6-7>  MnSymbolE6
               <7-8>  MnSymbolE7
               <8-9>  MnSymbolE8
               <9-10> MnSymbolE9
              <10-12> MnSymbolE10
              <12->   MnSymbolE12
            }{}
            \DeclareFontShape{OMX}{MnSymbolE}{b}{n}{
                <-6>  MnSymbolE-Bold5
               <6-7>  MnSymbolE-Bold6
               <7-8>  MnSymbolE-Bold7
               <8-9>  MnSymbolE-Bold8
               <9-10> MnSymbolE-Bold9
              <10-12> MnSymbolE-Bold10
              <12->   MnSymbolE-Bold12
            }{}
            \let\llangle\@undefined
            \let\rrangle\@undefined
            \DeclareMathDelimiter{\llangle}{\mathopen}%
                                 {MnLargeSymbols}{'164}{MnLargeSymbols}{'164}
            \DeclareMathDelimiter{\rrangle}{\mathclose}%
                                 {MnLargeSymbols}{'171}{MnLargeSymbols}{'171}
    \DeclareFontFamily{U}{matha}{\hyphenchar\font45}
    \DeclareFontShape{U}{matha}{m}{n}{ <-6> matha5 <6-7> matha6 <7-8>
    matha7 <8-9> matha8 <9-10> matha9 <10-12> matha10 <12-> matha12 }{}
    \DeclareSymbolFont{matha}{U}{matha}{m}{n}
    \DeclareFontFamily{U}{mathx}{\hyphenchar\font45}
    \DeclareFontShape{U}{mathx}{m}{n}{ <-6> mathx5 <6-7> mathx6 <7-8>
    mathx7 <8-9> mathx8 <9-10> mathx9 <10-12> mathx10 <12-> mathx12 }{}
    \DeclareSymbolFont{mathx}{U}{mathx}{m}{n}
    \DeclareMathDelimiter{\llbrack} {4}{matha}{"76}{mathx}{"30}
    \DeclareMathDelimiter{\rrbrack} {5}{matha}{"77}{mathx}{"38}
\renewcommand{\le}{\leqslant}
\renewcommand{\leq}{\leqslant}
\renewcommand{\ge}{\geqslant}
\renewcommand{\geq}{\geqslant}
\newcommand{\single}{\mathsf{single}}
\newcommand{\prt}{\pi}
\newcommand{\inj}{\mathrm{inj}}
\newcommand{\is}{\mathsf{Ising}}
\newcommand{\sph}{\mathsf{spherical}}
\newcommand{\bbeta}{\boldsymbol{\beta}}
\newtheorem{theorem}{Theorem}[section]
\newtheorem{proposition}[theorem]{Proposition}
\newtheorem{corollary}[theorem]{Corollary}
\newtheorem{lemma}[theorem]{Lemma}
\theoremstyle{definition} 
\newtheorem{definition}[theorem]{Definition}
\newtheorem{example}[theorem]{Example}
\newtheorem{remark}[theorem]{Remark}
\newenvironment{proofclaim}[1][Proof]
	{\begin{proof}[#1]}
	{\end{proof}}
\title[Balanced multi-species spin glasses]{Balanced multi-species spin glasses}
\subjclass[2020]{60K35, 
60G15, 
82B44, 
82D30. 
}
\keywords{Multi-species spin glass, free energy, ground state energy, Parisi formula, Gaussian tensors, injective norm}
\author[E. Bates]{Erik Bates$^*$\,\orcidlink{0000-0002-3472-036X}}
\address{$^*$Department of Mathematics, North Carolina State University}
\email{ebates@ncsu.edu}
\author[Y. Sohn]{Youngtak Sohn$^\dagger$\,\orcidlink{0009-0009-0038-1417}}
\address{$^\dagger$Division of Applied Mathematics, Brown University} 
\email{youngtak\_sohn@brown.edu}
\begin{document}


\begin{abstract}
We identify a special class of multi-species spin glass models: ones in which the species proportions serve to ``balance'' out the interaction strengths.
For this class, we prove a free energy lower bound that does not require any convexity assumption, and applies to both Ising and spherical models.
The lower bound is the free energy of a single-species model whose $p$-spin inverse-temperature parameter is exactly the variance of the $p$-spin component of the multi-species Hamiltonian.
For the Ising case, this generalizes an inequality recently found by Issa~\cite{issa24_arxiv} in the context of vector spin models.
We further demonstrate that our lower bound is actually an equality in many cases, including: at high temperatures for all models, at all temperatures for convex models, and at zero temperature for pure bipartite spherical models.
When translated to a statement about the injective norm of a nonsymmetric Gaussian tensor, our lower bound matches an upper bound recently established by Dartois and McKenna \cite{dartois_mckenna24_arxiv}.
\end{abstract}

\maketitle


\tableofcontents


\section{Introduction}

\subsection{Background and motivation} \label{subsec_motivation}
One of the central mathematical achievements in spin glass theory is verification of Parisi's formula for the limiting free energy 
of the \mbox{Sherrington--Kirkpatrick (SK)} model.
This variational formula was identified non-rigorously by Parisi~\cite{parisi79,parisi80a,parisi80b,parisi83} and ultimately proved two decades later in the celebrated works of Guerra~\cite{guerra03} and \mbox{Talagrand~\cite{talagrand06a}}.
The proof strategy was refined en route to generalizations for spherical and mixed $p$-spin models \cite{talagrand06b,panchenko14,chen13}, with key intervening developments including the \mbox{Aizenman--Sims--Starr} scheme~\cite{aizenman_sims_starr03}, Talagrand's positivity principle~\cite{talagrand03c}, and Panchenko's verification of ultrametricity \cite{panchenko13b}.

Recently the Parisi framework has been carried through to yet broader settings: so-called multi-species and vector spin models.
A central difficulty for such models is reinterpreting the Ghirlanda--Guerra identities \cite{ghirlanda_guerra98}, which are an essential component of the modern theory.
In response to this difficulty, Panchenko developed a powerful ``synchronization'' approach that delivered Parisi-type variational formulas for both multi-species and vector spin models \cite{panchenko15,panchenko18a,panchenko18b}.
This development led to further generalizations such as a multi-species Potts spin glass \cite{jagannath_ko_sen18}, spherical models with constrained overlaps \cite{ko20}, and multi-species spherical models \cite{bates_sohn22a,bates_sohn22b}.
Nevertheless, all of these works encounter two fundamental hurdles:

\begin{enumerate}[label=\textup{\arabic*.},ref=\textup{\arabic*}]

\item \label{hurdle_1} The free energy upper bound---using Guerra interpolation---requires a convexity assumption on the covariance structure of the model.
This restricts the class of models that can be rigorously treated, excluding very natural cases of interest such as the bipartite SK model \cite{barra_genovese_guerra11,barra_galluzzi_guerra_pizzoferrato_tantari14}.
Moreover, Guerra-type upper bounds are generally believed to be \textit{incorrect} for nonconvex models, rather than simply unproven.

\item \label{hurdle_2} Meanwhile, the lower bound---using the Aizenman--Sims--Starr scheme---introduces a functional order parameter that is significantly more complicated than in the classical Parisi formula.  Namely, the variational parameter is a probability measure on a certain higher-dimensional space of matrices, rather than on $\R$.
This makes the generalized models more difficult to analyze, and also harder to relate to their classical single-species/one-dimensional counterparts.

\end{enumerate}

Overcoming these hurdles is an active area of research.
Regarding hurdle~\ref{hurdle_1}, there are upper bounds not requiring convexity \cite{mourrat21b,mourrat23} that differ (at least in certain cases) with what Guerra interpolation would predict.
These upper bounds arise from the influential program initiated in \cite{mourrat22}, wherein the Parisi formula is reinterpreted as the solution to a Hamilton--Jacobi equation in Wasserstein space \cite{chen_mourrat25,dominguez_mourrat24,chen24_arxiv,chen25_arxiv}.
It has been conjectured that these upper bounds are sharp.
In the special case of the bipartite spherical SK model, the limiting free energy has been computed explicitly \cite{auffinger_chen14,baik_lee20}, but the Ising version remains an important open problem.
For other pure spherical models, if convergence of free energy is assumed, then its limiting value can be computed from a TAP representation \cite{subag25,subag23c}.

Regarding hurdle~\ref{hurdle_2}, one recent development was realized for the Potts spin glass \cite{elderfield_sherrington83a}, which is the vector analogue of the classical SK model. 
In a ``balanced'' version of this model, it was proved in \cite{bates_sohn24} that the Parisi formula from \cite{panchenko18a} can be reduced to an optimization over probability measures on $\R$ rather than $\R^{d(d-1)/2}$, where $d$ is the ambient dimension of the vector spins.
This result has been adapted to models where a self-overlap correction replaces the balanced constraint: first in the Potts case \cite{chen24c} and then to general permutation-invariant vector spin models \cite{issa24_arxiv}.
These works demonstrated that a spin glass model's symmetry can be exploited to simplify its associated Parisi formula, a theme also present in this paper.
One hope is that the results of \cite{bates_sohn24,chen24c,issa24_arxiv} can be used to simplify the expression for the canonical Potts spin glass (i.e.\ without balancing or self-overlap correction), although it was recently observed in \cite{mourrat25} that the canonical model behaves differently for large enough values of $d$.

In this work, we introduce a class of multi-species spin glasses, which we call \textit{balanced} models (Definition~\ref{def_balanced}), for which we prove a free energy lower bound with an appealing form: the free energy of a particular \textit{single}-species model (Theorem~\ref{thm_main}).
Here we highlight a few features of this result:
\begin{itemize}
     \item It applies to both Ising and spherical models. In the Ising case, the lower bound is similar to work of Issa \cite{issa24_arxiv}; see Remark~\ref{rem_compare}.
    The addition of spherical models leads to several cases where our lower bound is an equality even for non-convex models at low temperature.
    These are mentioned in the next two bullet points.
    \item For convex models, our lower bound is actually the correct limiting free energy (Corollary~\ref{cor_convex}), thereby simplifying formulas from previous works in a fashion similar to \cite{bates_sohn24,chen24_arxiv,issa24_arxiv}; see Remark~\ref{rmk_spar} for further discussion.
    In the non-convex case, our lower bound is correct at least at high temperatures (Corollary~\ref{cor_high}), which cannot be said of replica symmetry breaking bounds obtained from the Aizenman--Sims--Starr scheme.
    Regarding low temperatures, we do not have a general result, but we are not aware of any example where our lower bound is provably \textit{not} an equality. 
    In fact, for the bipartite spherical SK model (Example~\ref{rem_bipartite_SK}), equality holds at all temperatures.
    \item At zero temperature, the result leads to two more new limit theorems in non-convex models: the ground state energy of pure bipartite spherical models in the balanced case (Corollary~\ref{cor_bpbs}), and the injective norm of nonsymmetric Gaussian tensors (Corollary~\ref{cor_inj_norm}).
    Upper bounds for these quantities were established in \cite{mckenna24, dartois_mckenna24_arxiv} by calculating the first moment of the number of critical points using the Kac--Rice formula. 
    This paper offers a simple approach to obtain matching lower bounds, bypassing second moment calculations (see Remark~\ref{rmk:sec:moment}).
\end{itemize}

For these reasons, our result can be viewed as bypassing both hurdles~\ref{hurdle_1} and \ref{hurdle_2} for the special balanced case.
Moreover, the argument is elementary and is nearly self-contained: the only major input is a multi-species version of Talagrand's positivity principle \cite{talagrand03c}.

\subsection{Multi-species spin glasses}
Let $\mathscr{S}$ be a finite set.
Each element of $\mathscr{S}$ is referred to as a species.
Suppose that for each positive integer $N$, there is a partition of the set $\{1,\dots,N\}$ into the various species: 
\eq{
\{1,\dots,N\} = \biguplus_{s\in\mathscr{S}} \II_{s,N}.
}
We denote the fraction of coordinates belonging to species $s$ by $\lambda_{s,N} = \#\II_{s,N}/N$, and assume this fraction converges to a positive number as $N\to\infty$:
\begin{align} \tag{H1} \label{converging_ratios}
 \lim_{N \to \infty} \lambda_{s,N} 
 \eqqcolon\lambda_{s}>0 \quad \text{for each $s\in\mathscr{S}$}.
\end{align}
We write $\vc\lambda_N = (\lambda_{s,N})_{s\in\SSS}$ and $\vc\lambda = (\lambda_s)_{s\in\SSS}$ for the vectors of these ratios. 
We necessarily have $\sum_{s\in\SSS}\lambda_{s,N} = 1 = \sum_{s\in\SSS}\lambda_s$.

The next parameter is a collection of nonnegative numbers $\vc\Delta = (\Delta_{s_1,\dots,s_p})_{s_1,\dots,s_p\in\SSS,\,p\ge1}$ governing the $p$-spin interaction strengths between species.
We assume without loss of generality\footnote{
The coefficient of $\prod_{s\in\SSS}(\lambda_{s,N} x_s)^{p_s}$ in \eqref{cov_b} is the sum of $(\sum_s p_s)!/\prod_s (p_s!)$ many entries of the array $(\Delta^2_{s_1,\dots,s_p})_{s_1,\dots,s_p}$, namely those entries satisfying $\#\big\{j\in\{1,\dots,p\}:\, s_j = s\big\} = p_s$ for each $s$.
By replacing each of these entries with their arithmetic mean, we obtain a \textit{symmetric} model that has the same distribution.}  that $(\Delta_{s_1,\dots,s_p})_{s_1,\dots,s_p\in\SSS}$ is symmetric for each $p$: $\Delta_{s_1,\dots,s_p} = \Delta_{s_{\prt(1)},\dots,s_{\prt(p)}}$ for any permutation $\prt$ on $p$ elements.
We write $\Delta_{s_1,\dots,s_p}^2$ for the square of the number $\Delta_{s_1,\dots,s_p}$.

The final parameter is the choice of configuration space.
We consider either of the two typical cases:
\begin{itemize}
    \item ($\is$) $\Sigma_{N,\is}$ is the discrete hypercube $\{-1,+1\}^N$, and $\mu_{N,\is}$ is uniform measure on this hypercube.
    \item ($\sph$) $\Sigma_{N,\sph}$ is the product of spheres $\otimes_{s\in\mathscr{S}}S_{\# \II_{s,N}}$, where 
    \eq{
    S_{n} \coloneqq \{x\in\R^n:\, \|x\|_2^2 = n\},
    }
    and $\mu_{N,\sph}$ is the corresponding product of uniform measures.
\end{itemize}
Often we will not need to distinguish between these two cases, so we just use a placeholder symbol $\sig\in\{\is,\sph\}$.
We write an element of $\Sigma_{N,\sig}$ as $\sigma = (\sigma_1,\dots,\sigma_N)$, where coordinate $\sigma_i$ is associated to species $s$ if $i\in \II_{s,N}$.
Note that $\Sigma_{N,\is}\subseteq\Sigma_{N,\sph}$, so for either choice of $\sig\in\{\is,\sph\}$, we have
\eeq{ \label{91knbx}
\sum_{i\in \II_{s,N}}\sigma_i^2 = \# \II_{s,N} = \lambda_{s,N}\cdot N \quad \text{for every $s\in\mathscr{S}$, $\sigma\in\Sigma_{N,\sig}$.}
}

We are now ready to define the spin glass model.
The multi-species mixed $p$-spin Hamiltonian is the random function $H_N\colon\Sigma_{N,\sig}\to\R$ given by
\begin{equation}\label{MSKHamiltonian}
H_{N}(\sigma) 
\coloneqq \sum_{p=1}^\infty\frac{1}{N^{(p-1)/2}}\sum_{s_1,\dots,s_p\in\mathscr{S}}\sqrt{\Delta^2_{s_1,\dots,s_p}}\sum_{i_1\in \II_{s_1,N}, \dots, i_p\in \II_{s_p,N}} g_{i_1,\dots,i_p}\sigma_{i_1}\cdots\sigma_{i_p},
\end{equation}
where $(g_{i_1,\dots,i_p})_{1\le i_1,\dots,i_p\le N,\,p\ge1}$ are independent standard normal random variables. 
The Gaussian process $(H_N(\sigma))_{\sigma\in\Sigma_{N,\sig}}$ has mean zero and covariance
\eeq{ \label{cov_a}
\E[H_N(\sigma)H_N(\tau)] = N\xi_N\big(\vc R(\sigma,\tau)\big),
}
where $\xi_N\colon[-1,1]^\SSS\to\R$ is given by
\eeq{ \label{cov_b}
\xi_N(\vc x) \coloneqq \sum_{p=1}^\infty \sum_{s_1,\dots,s_p\in\SSS}\Delta^2_{s_1,\dots,s_p}\lambda_{s_1,N}\cdots\lambda_{s_p,N}x_{s_1}\cdots x_{s_p},
}
and $\vc R(\sigma,\tau) = (R_s(\sigma,\tau))_{s\in\SSS}$ is the vector of intra-species overlaps:
\eeq{ \label{overlap_def}
R_s(\sigma,\tau) \coloneqq \frac{1}{\# \II_{s,N}}\sum_{i\in \II_{s,N}}\sigma_i\tau_i.
}
Note that by Cauchy--Schwarz and \eqref{91knbx}, we always have $\vc R(\sigma,\tau)\in[-1,1]^\SSS$. 
To ensure well-definedness (and analyticity) of $\xi_N$ on this domain, we assume the following decay condition on the entries of $\vc\Delta^2$:
\eeq{ \label{delta_decay_finite}
\sum_{p=1}^\infty(1+\eps)^p\sum_{s_1,\dots,s_p\in\SSS}\Delta^2_{s_1,\dots,s_p}\lambda_{s_1,N}\cdots\lambda_{s_p,N} <\infty \quad \text{for some $\eps>0$.}
}
Since $\lambda_{s,N}\to\lambda_s$ as $N\to\infty$, this condition is implied (for all sufficiently large $N$) by the following statement:
\begin{align} \tag{H2} \label{delta_decay}
\sum_{p=1}^\infty(1+\eps)^p\sum_{s_1,\dots,s_p\in\SSS}\Delta^2_{s_1,\dots,s_p}\lambda_{s_1}\cdots\lambda_{s_p} <\infty \quad \text{for some $\eps>0$.}
\end{align}

We assume \eqref{converging_ratios} and \eqref{delta_decay} throughout the paper; these are the usual hypotheses to make sure the model is well-posed. 
Under these assumptions, we have the following uniform convergence:
\eeq{ \label{uniform_cov_convergence}
\lim_{N\to\infty}\sup_{\vc x\in[-1,1]^\SSS}|\xi_N(\vc x)-\xi(\vc x)| = 0,
}
where the limiting covariance function is
\eeq{ \label{cov_c}
\xi(\vc x)
\coloneqq \sum_{p=1}^\infty \sum_{s_1,\dots,s_p\in\SSS}\Delta^2_{s_1,\dots,s_p}\lambda_{s_1}\cdots\lambda_{s_p}x_{s_1}\cdots x_{s_p}.
}
Define the free energy
\eeq{ \label{FNZN_def}
F_{N,\sig}(\vc\Delta,\vc\lambda_N) 
&\coloneqq\frac{1}{N}\mathbb{E} \log Z_{N,\sig}(\vc\Delta,\vc\lambda_N), \\
\text{where}\quad 
Z_{N,\sig}(\vc\Delta,\vc\lambda_N) &\coloneqq\int_{\Sigma_{N,\sig}}\exp H_{N}(\sigma)\ \mu_{N,\sig}(\dd\sigma).
}
Also define the ground state energy as
\eeq{ \label{GSE_def}
\GSE_{N,\sig}(\vc\Delta,\vc\lambda_N)
\coloneqq\frac{1}{N}\E\Big(\max_{\sigma \in \Sigma_{N,\sig}} H_N(\sigma)\Big).
}
In the classical single-species case $\#\SSS=1$, the array $(\Delta_{s_1,\dots,s_p})_{s_1,\dots,s_p\in\SSS}$ is just a single number (typically denoted $\beta_p$), and the vector $\vc\lambda_N$ is just the single number $1$.
To distinguish this special case, we denote the corresponding free energy and ground state energy by $F^{\single}_{N,\sig}(\bbeta)$ and $\GSE^{\single}_{N,\sig}(\bbeta)$, where $\bbeta = (\beta_1,\beta_2,\dots)$. 

\subsection{Main result}
Our interest is in the values of 
\eq{
\lim_{N\to\infty} F_{N,\sig}(\vc\Delta,\vc\lambda_N) \quad \text{and} \quad \lim_{N\to\infty}\GSE_{N,\sig}(\vc\Delta,\vc\lambda_N),
}
should these limits exist.
For the Ising model without any additional assumptions on $\vc\Delta$, Panchenko \cite{panchenko15} proved a lower bound in terms of a multi-species Parisi variational formula.
A similar result for the spherical model was later proved by the authors \cite{bates_sohn22a}.
Our main result (Theorem~\ref{thm_main}) provides a new alternative lower bound in the following special case.
\begin{definition}
\label{def_balanced}
We say the multi-species parameter pair $(\vc\Delta,\vc\lambda)$ is \textit{balanced} if
\begin{equation}  \tag{H3} \label{balanced}
\begin{split}
\Delta^2_{t} \quad &\text{does not depend on $t\in\SSS$ for $p=1$, and} \\
\sum_{s_2,\dots,s_p\in\SSS}\Delta^2_{t,s_2,\dots,s_p}\lambda_{s_2}\cdots\lambda_{s_p} \quad &\text{does not depend on $t\in\SSS$, for every $p\ge2$.}
\end{split}
\end{equation}
\end{definition}

\begin{example} \label{ex_model}
Definition~\ref{def_balanced} includes the following examples:
\begin{enumerate}[label=\textup{(\alph*)}]

\item \label{ex_model_1} 
Any model in which the species are ``exchangeable'': $\lambda_s = \frac{1}{\#\SSS}$ for every $s\in\SSS$, and $\Delta_{s_1,\dots,s_p} = \Delta_{\pi(s_1),\dots,\pi(s_p)}$ for every permutation $\pi$ on $\SSS$.  
For instance, the bipartite SK model \eqref{bipartite_parameters} fits this description.
The $p$-partite model in Lemma~\ref{lem_translate} also fits this description.

\item \label{ex_model_2}
Any model such that for every $p$, the value of $\Delta_{s_1,\dots,s_p}$ depends only on $p$ (say it is equal to $\beta_p$).
In the Ising case, this condition reduces the model to a single species.
But in the spherical case, there is still a distinction: $\Sigma_{N,\sph}$ is a product of spheres rather than a single sphere.
We are not aware of any special treatment of this scenario (i.e.\ constant interaction strengths, but on a product of spheres) in the literature, but the associated covariance function 
\eq{
\xi(\vc x) = \sum_{p=1}^\infty \beta_p^2\Big(\sum_{s\in\SSS}\lambda_sx_s\Big)^p
}
is convex on $[0,1]^\SSS$ and thus included in Corollary~\ref{cor_convex} below.

\item \label{ex_model_3}
Any pure model (i.e.\ $\xi(\vc x) \propto \prod_{s\in\SSS}x_s^{p_s}$ for some exponents $p_s\ge1$) such that
\eeq{ \label{3r38x}
\frac{1}{\lambda_t}\frac{\partial\xi}{\partial x_t}(\vc 1) \quad \text{does not depend on $t\in\SSS$},
}
where $\vc 1 = (1,\dots,1)$.
For instance, the models in Example~\ref{rem_bipartite_pure} satisfy this condition.
In general, \eqref{balanced} implies \eqref{3r38x} because
\eq{
\frac{1}{\lambda_t}\frac{\partial\xi}{\partial x_t}(\vc x) = \Delta^2_t + \sum_{p=2}^\infty \sum_{s_2,\dots,s_p\in\SSS}p\Delta^2_{t,s_2,\dots,s_p}\lambda_{s_2}\cdots\lambda_{s_p}x_{s_2}\cdots x_{s_p}.
}
For pure models the reverse implication is also true, since there is only one value of $p$ (namely $p = \sum_{s\in\SSS}p_s$) such that $\Delta_{t,s_2,\dots,s_p}$ can take nonzero values.
More generally, the implication \eqref{3r38x}$\implies$\eqref{balanced} holds whenever $\xi$ is a homogeneous polynomial of degree $p$, in which case
the parameter $\beta_p^2$ defined in \eqref{beta_def} is simply $\xi(\vc 1)$.
\QED

\end{enumerate}
\end{example}

The following is our main result and will be proved in Section~\ref{sec_lower_bound}.
As mentioned before, hypotheses \eqref{converging_ratios} and \eqref{delta_decay} are standard and ensure that the model is well-posed.
The balanced assumption \eqref{balanced} is the special extra ingredient.

\begin{theorem}[Lower bound for balanced models]
\label{thm_main}
Assume \eqref{converging_ratios}, \eqref{delta_decay}, \eqref{balanced}. 
Then for either $\sig\in\{\is,\sph\}$ we have
\eeq{\label{eq:thm:msk}
\liminf_{N \to \infty} F_{N,\sig}(\vc\Delta,\vc\lambda_N)\geq \lim_{N\to\infty} F_{N,\sig}^\single(\bbeta),
}
where the coordinates of $\bbeta = (\beta_p)_{p\ge1}$ are given by
\eeq{ \label{beta_def}
\beta_p^2 \coloneqq \sum_{s_1,\dots,s_p\in\SSS}
\Delta^2_{s_1,\dots,s_p}\lambda_{s_1}\cdots\lambda_{s_p}.
}
Furthermore,
\eeq{ \label{GSE_lower}
\liminf_{N\to\infty} \GSE_{N,\sig}(\vc\Delta,\vc\lambda_N)\geq \lim_{N\to\infty} \GSE_{N,\sig}^{\single} (\bbeta).
}
\end{theorem}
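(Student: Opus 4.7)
The plan is to compare $H_N$ to an auxiliary Gaussian Hamiltonian
\eq{
\tilde H_N(\sigma):=\sum_{s\in\SSS}H^{\single,s}_{n_s}(\sigma|_{\II_{s,N}}),
}
in which $(H^{\single,s}_{n_s})_{s\in\SSS}$ are independent copies of the single-species Hamiltonian with coefficients $\bbeta$, each living on the $n_s:=\#\II_{s,N}$ coordinates of species $s$. Its covariance equals $N\sum_s\lambda_{s,N}\zeta(R_s(\sigma,\tau))$ with $\zeta(r):=\sum_p\beta_p^2\,r^p$, and its diagonal matches that of $H_N$ since $\xi(\vc 1)=\sum_p\beta_p^2=\zeta(1)$ by \eqref{beta_def} (the balanced hypothesis will enter only in Step 1 below). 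The strategy is to show $F_{N,\sig}(\vc\Delta,\vc\lambda_N)\ge \tfrac{1}{N}\E\log Z[\tilde H_N]+o(1)$ by Gaussian interpolation, and then to identify $\lim_N \tfrac{1}{N}\E\log Z[\tilde H_N]=\lim_N F^{\single}_N(\bbeta)$.

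The first (algebraic) step is the pointwise bound
\eq{
\xi(\vc x)\;\le\;\sum_{s\in\SSS}\lambda_s\,\zeta(x_s)\qquad\text{whenever }x_s\ge 0\text{ for every }s\in\SSS,
}
which is the only place \eqref{balanced} is used. To prove it, apply the AM-GM inequality $x_{s_1}\cdots x_{s_p}\le \tfrac{1}{p}\sum_{j=1}^p x_{s_j}^p$ termwise in \eqref{cov_c}. Because $\Delta^2_{s_1,\dots,s_p}$ is symmetric in its indices, the $p$ resulting summands collapse to a single representative, and the $t$-independence of $\sum_{s_2,\dots,s_p}\Delta^2_{t,s_2,\dots,s_p}\lambda_{s_2}\cdots\lambda_{s_p}=\beta_p^2$ from \eqref{balanced} then reduces the $p$-spin piece to $\beta_p^2\sum_s\lambda_s x_s^p$.

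The second step is the Gaussian interpolation $H_N^t=\sqrt{1-t}\,H_N+\sqrt{t}\,\tilde H_N$ (with $\tilde H_N$ independent of $H_N$). A standard integration-by-parts computation gives
\eq{
\frac{\dd}{\dd t}F_N(t)=\tfrac{1}{2}\,\E\Big\langle\xi_N(\vc R(\sigma,\tau))-\textstyle\sum_s\lambda_{s,N}\zeta(R_s(\sigma,\tau))\Big\rangle_t+o(1),
}
where the $o(1)$ absorbs the vanishing diagonal gap. Before invoking the algebraic inequality, I would attach a standard perturbation (species-wise cavity fields) to $H_N$ so that the multi-species analogue of Talagrand's positivity principle forces $R_s(\sigma,\tau)\ge 0$ asymptotically under $\langle\cdot\rangle_t$ for each $s\in\SSS$; this perturbation costs $o(1)$ in the free energy. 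Combined with Step 1, the interpolation derivative becomes nonpositive in the limit, yielding $F_{N,\sig}(\vc\Delta,\vc\lambda_N)\ge \tfrac{1}{N}\E\log Z[\tilde H_N]+o(1)$. Since $\tilde H_N$ decouples across species, $\tfrac{1}{N}\E\log Z[\tilde H_N]=\sum_s\lambda_{s,N}\cdot F^{\single}_{n_s}(\bbeta)\to \lim_N F^{\single}_N(\bbeta)$, proving \eqref{eq:thm:msk}. The ground-state bound \eqref{GSE_lower} follows by applying \eqref{eq:thm:msk} with $\vc\Delta,\bbeta$ replaced by $\beta\vc\Delta,\beta\bbeta$, dividing by $\beta$, and sending $\beta\to\infty$.

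The main obstacle is the interpolation step: Step 1 only furnishes a pointwise inequality on the nonnegative orthant, so transferring it into the Gibbs expectation genuinely requires a full multi-species positivity principle. In the Ising case this can follow Panchenko's template, but the spherical case with its product-of-spheres constraint will require separate verification.
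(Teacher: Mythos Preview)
Your proposal is correct and follows essentially the same route as the paper: your comparison Hamiltonian $\tilde H_N$ is (up to an $o(1)$ rescaling of $\bbeta$ by powers of $\lambda_{s,N}/\lambda_s$) the paper's $H_{N,\overline{\vc\Delta}}$ from \eqref{rwe8gbx}, your Step~1 is exactly Proposition~\ref{key_claim}, and the interpolation-plus-positivity argument is Proposition~\ref{prop_key} combined with Lemma~\ref{pos_principle}. The multi-species positivity principle you flag as the main obstacle is indeed the essential external input, and it is available in both the Ising and spherical cases via \cite{bates_sohn22a} (see Lemma~\ref{pos_principle}).
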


For the rest of the paper, we let $\beta_p= \beta_p(\vc\Delta,\vc\lambda)$ be defined as in \eqref{beta_def}.
The collection $\bbeta = (\beta_p)_{p\ge1}$ is used in Theorem~\ref{thm_main} as the inverse temperature parameters for a \textit{single}-species model to which the balanced multi-species model is compared.
When $\xi$ is convex, our lower bound is in fact an equality (Corollary~\ref{cor_convex}) as were the lower bounds in \cite{panchenko15,bates_sohn22a} obtained from the Aizenman--Sims--Starr scheme, but our expression is much simpler because of the balanced hypothesis (see Remark~\ref{rmk_spar}).
When $\xi$ is not convex, our lower bound can in fact be larger than those from \cite{panchenko15,bates_sohn22a}.
For example, in the bipartite Ising SK model (same as Example~\ref{rem_bipartite_SK} but on the hypercube instead of a product of spheres), our lower bound is correct at high temperature (Corollary~\ref{cor_high}), but the Aizenman--Sims--Starr scheme is not: the limiting value of $\xi(\vc 1)/2$ is obtained at a saddle point of the replica symmetric expression, not a minimizer \cite[Remark~5]{barra_genovese_guerra11}.
See \cite[Section~1.5]{bates_sloman_sohn19_arxiv} for further discussion.

\begin{remark}[Earlier results by Issa]\label{rem_compare}
For Ising models with ``exchangeable'' species (Example~\ref{ex_model}\ref{ex_model_1}), the lower bound \eqref{eq:thm:msk} is already known from \cite[Theorem~7.6]{issa24_arxiv}.
This is because such models can be treated as permutation-invariant vector spin glasses.
Furthermore, \cite[Theorem~7.10]{issa24_arxiv} shows that this lower bound is an equality if the viscosity solution to a certain equation is Gateaux differentiable.
This differentiability is known when $\xi$ is convex \cite{chen_mourrat25}, so \cite[Theorem~1.3]{issa24_arxiv} can used to obtain our Corollary~\ref{cor_convex} in the Ising case with exchangeable species, although our route to this corollary is different.
Our proof of Theorem~\ref{thm_main}, however, uses a strategy similar to \cite[Section~7]{issa24_arxiv}, but we allow for spherical models and for non-equal species ratios.
\QED
\end{remark}

For the proof of Theorem~\ref{thm_main}, we use a (completely general) one-sided bound \eqref{rh6ef1z}, obtained from Guerra-style interpolation together with Talagrand's positivity principle. 
The key observation is that the balanced assumption allows us to know the sign of the error term in this bound, via the simple but crucial calculation in Proposition~\ref{key_claim}.
Moreover, the equality statement in that claim suggests that our choice of $\bbeta$ is optimal.
Indeed, we provide in Section~\ref{sec_apps} several cases in which there is an upper bound that matches the lower bound from Theorem~\ref{thm_main}, which leads to new limit theorems. See also Remark~\ref{rmk:DM25} for an independent concurrent work.

\begin{remark}[Convergence of free energies] \label{rmk_conv}
Without further assumptions on $\xi$ such as convexity (see Proposition~\ref{prop:guerra}), it has not been proved that the multi-species quantities $F_{N,\sig}(\vc\Delta,\vc\lambda_N)$ and $\GSE_{N,\sig}(\vc\Delta,\vc\lambda_N)$ converge as $N\to\infty$.
On the other hand, convexity on $[0,1]^\SSS$ is automatic when $\#\SSS=1$, so the single-species quantities $F_{N,\sig}^\single(\bbeta)$ and $\GSE^\single_{N,\sig}(\bbeta)$ are known to converge (provided $\bbeta$ satisfies \eqref{beta_decay} for some $\eps>0$, which follows from \eqref{delta_decay}). See Remark~\ref{rmk_spar} for relevant references.
\QED
\end{remark}

\begin{remark}[Concentration] \label{rmk_conc}
By standard Gaussian concentration techniques, it is well known that both the free energy and the ground state energy concentrate near their mean.
More specifically, \cite[Lemma~3]{panchenko07} gives 
\eq{
\P\Big(\Big|\frac{\log Z_{N,\sig}(\vc\Delta,\vc\lambda_N)}{N} - F_{N,\sig}(\vc\Delta,\vc\lambda_N)\Big| \ge t\Big) 
\le 2\exp\Big(\frac{-Nt^2}{4\xi_N(\vc 1)}\Big) 
\quad \text{for all $t\ge0$.}
}
Meanwhile, the Borell--TIS inequality \cite[Theorem~5.8]{boucheron_lugosi_massart13} gives
\eeq{ \label{borell_tis}
\P\Big(\Big|\max_{\sigma \in \Sigma_{N,\sig}}\frac{H_N(\sigma)}{N} - \GSE_{N,\sig}(\vc\Delta,\vc\lambda_N)\Big|\ge t\Big) 
\le 2\exp\Big(\frac{-Nt^2}{2\xi_N(\vc 1)}\Big)
\quad \text{for all $t\ge0$.}
}
These concentration results imply that \eqref{eq:thm:msk} and \eqref{GSE_lower} remain true almost surely if we remove the expectation $\E$ from \eqref{FNZN_def} and \eqref{GSE_def}.
\QED
\end{remark}

\begin{remark}[Regularity assumption]
Whenever considering a spin glass Hamiltonian with infinitely many terms, one needs a decay condition on the coefficients in order for the model to be well-defined.
Our condition \eqref{delta_decay} is meant to be essentially optimal, although in the literature often $1+\eps$ is replaced by $2$ simply for convenience.
Any such instance does not genuinely affect the theory, so we will quote results from other papers without further mention of this technical detail.
For an example of how \eqref{delta_decay} is sufficient, see the arXiv version of \cite{bates_sohn22a_arxiv}.
\QED
\end{remark}

\begin{remark}[External fields]
Theorem~\ref{thm_main} also holds when one includes an external field of the form $h\sum_{i=1}^{N}\sigma_i$ in both the multi-species and single-species Hamiltonians, where $h\in\R$ is a constant.
The only modification is notational: in the proof of Proposition~\ref{prop_key}, the first line of \eqref{8g2bfd3} would consist of covariances rather than expectations of products, but the second line would remain the same.
We set $h=0$ for simplicity.
\QED
\end{remark}

\begin{remark}[Mourrat's upper bound]
It would be interesting to see if the upper bound from \cite[Theorem~1.1]{mourrat23} matches our lower bound \eqref{eq:thm:msk} in some nonconvex case.
As mentioned in Remark~\ref{rem_compare}, \cite[Theorem~1.5]{issa24_arxiv} proves equality for exchangeable models under an unverified but plausible hypothesis; see \cite[Remark~7.12]{issa24_arxiv}.
Of particular interest is the bipartite Ising SK model (same as Example~\ref{rem_bipartite_SK} but on the hypercube instead of a product of spheres), which is a major open problem.
\QED
\end{remark}

\section{Applications} \label{sec_apps}

This section gives several scenarios in which the lower bound from Theorem~\ref{thm_main} agrees with an upper bound already in the literature.
With the exception of the bipartite SK model (Example~\ref{rem_bipartite_SK}), the resulting limit theorems are new.

\subsection{High-temperature regime}
The first and easiest scenario in which our lower bound is exact is when the inverse-temperature parameters $\bbeta = (\beta_p)_{p\ge1}$ are sufficiently small that the limiting quenched free energy agrees with the annealed free energy.
More precisely, by Jensen's inequality we always have
\eeq{ \label{er8bx}
F_{N,\sig}(\vc\Delta,\vc\lambda_N) 
&\stackrefp{FNZN_def}{<} \frac{1}{N}\log \E Z_{N,\sig}(\vc\Delta,\vc\lambda_N) \\
&\stackref{FNZN_def}{=} \frac{1}{N}\log\int_{\Sigma_{N,\sig}}\E\exp H_N(\sigma)\ \mu_{N,\sig}(\dd\sigma)
\stackref{cov_a}{=} \frac{\xi_N(\vc 1)}{2}.
}
When strictness is maintained in the large-$N$ limit, we say the model is in the \textit{low-temperature phase}.
If instead the limits of the two sides agree, we say the model is in the \textit{high-temperature phase}.
The following result says that a balanced multi-species model is in the high-temperature phase whenever the corresponding single-species model is in its high-temperature phase.

\begin{corollary}[Equality at high temperature] \label{cor_high}
Assume \eqref{converging_ratios}, \eqref{delta_decay}, \eqref{balanced}.
With $\beta_p$ as in \eqref{beta_def}, define
\eeq{ \label{cov_single}
\xi^\single(x) \coloneqq \sum_{p=1}^\infty \beta_p^2x^p, \quad x\in[-1,1].
}
For either $\sig\in\{\is,\sph\}$, the following implication holds:
\eeq{ \label{2gmmb}
\lim_{N\to\infty}F_{N,\sig}^\single(\bbeta) = \frac{\xi^\single(1)}{2} \quad \implies \quad
\lim_{N\to\infty} F_{N,\sig}(\vc\Delta,\vc\lambda_N) = \frac{\xi^\single(1)}{2}. 
}
\end{corollary}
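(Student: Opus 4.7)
The plan is to sandwich the multi-species free energy between two quantities that coincide in the limit: the single-species free energy $F_{N,\sig}^\single(\bbeta)$ (providing the lower bound via Theorem~\ref{thm_main}) and the annealed value $\xi_N(\vc 1)/2$ (providing the universal Jensen upper bound \eqref{er8bx}). Under the hypothesis of \eqref{2gmmb}, both will converge to $\xi^\single(1)/2$, forcing convergence of the multi-species free energy.

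First, I would observe that the limiting covariance at $\vc 1$ matches the single-species variance: by \eqref{cov_c} and \eqref{beta_def},
\eq{
\xi(\vc 1) = \sum_{p=1}^\infty \sum_{s_1,\dots,s_p \in \SSS} \Delta^2_{s_1,\dots,s_p}\lambda_{s_1}\cdots\lambda_{s_p} = \sum_{p=1}^\infty \beta_p^2 = \xi^\single(1),
}
and the uniform convergence \eqref{uniform_cov_convergence} gives $\xi_N(\vc 1) \to \xi(\vc 1)$. Notice this identity \emph{does not} use the balanced hypothesis \eqref{balanced}; it is purely the matching of the diagonal variances, which is exactly how $\bbeta$ was defined in \eqref{beta_def}.

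Next, combining the Jensen bound \eqref{er8bx} with the above yields
\eq{
\limsup_{N\to\infty} F_{N,\sig}(\vc\Delta,\vc\lambda_N) \le \lim_{N\to\infty}\frac{\xi_N(\vc 1)}{2} = \frac{\xi^\single(1)}{2}.
}
On the other hand, Theorem~\ref{thm_main} (whose hypotheses \eqref{converging_ratios}, \eqref{delta_decay}, \eqref{balanced} are in force) together with the high-temperature hypothesis on the single-species model gives
\eq{
\liminf_{N\to\infty} F_{N,\sig}(\vc\Delta,\vc\lambda_N) \ge \lim_{N\to\infty} F_{N,\sig}^\single(\bbeta) = \frac{\xi^\single(1)}{2}.
}
Both inequalities together force the limit to exist and equal $\xi^\single(1)/2$, which is \eqref{2gmmb}.

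There is essentially no obstacle here: the whole argument is a sandwich, and the nontrivial input (the lower bound of Theorem~\ref{thm_main}) has already been proved. The only minor point worth double-checking is that the single-species limit $\lim_N F_{N,\sig}^\single(\bbeta)$ exists to begin with; this is handled by Remark~\ref{rmk_conv}, since $\bbeta$ inherits the decay \eqref{beta_decay} from \eqref{delta_decay} via \eqref{beta_def}.
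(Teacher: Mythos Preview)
Your proof is correct and follows essentially the same sandwich argument as the paper: Theorem~\ref{thm_main} for the lower bound, the Jensen inequality \eqref{er8bx} for the upper bound, and the identification $\xi(\vc1)=\xi^\single(1)$ to close the gap. The only cosmetic difference is that you establish $\xi(\vc1)=\xi^\single(1)$ first, whereas the paper saves it for the final line.
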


\begin{proof}
When the hypothesis of \eqref{2gmmb} holds, we have
\eq{
\frac{\xi^\single(1)}{2} = \lim_{N\to\infty}F_{N,\sig}^\single(\bbeta)
&\stackref{eq:thm:msk}{\le} \liminf_{N\to\infty} F_{N,\sig}(\vc\Delta,\vc\lambda_N) \\
&\stackrefp{eq:thm:msk}{\le}\limsup_{N\to\infty} F_{N,\sig}(\vc\Delta,\vc\lambda_N) \\
&\stackrefpp{er8bx}{eq:thm:msk}{\le}\limsup_{N\to\infty}\frac{\xi_N(\vc1)}{2}
\stackref{uniform_cov_convergence}{=} \frac{\xi(\vc1)}{2}.
}
To complete the proof, observe that $\xi(\vc1)=\xi^\single(1)$ by comparing \eqref{cov_c} and \eqref{cov_single}.
\end{proof}

One naturally wonders if the converse of \eqref{2gmmb} is also true.
For the single-species Ising SK model, the critical value separating the high- and low-temperature phases is $\beta_\cc = \frac{1}{\sqrt{2}}$.\footnote{The lower bound $\beta_\cc\ge\frac{1}{\sqrt{2}}$ is from \cite[Proposition~2.1(ii)]{aizenman_lebowitz_ruelle87}, 
and the upper bound $\beta_\cc\le\frac{1}{\sqrt{2}}$ is from \cite{toninelli02}.
Regarding the latter reference, one can also see \cite[Theorem~13.3.1]{talagrand11b} with external field $h=0$ and $q=0$. 
The Hamiltonian in \cite{aizenman_lebowitz_ruelle87,toninelli02,talagrand11b} only sums over indices $i<j$, so their model at inverse temperature $\beta$ is equivalent to our two-spin single-species model at inverse temperature $\frac{\beta}{\sqrt{2}}$.
Their free energy also has an additional summand of $\log 2$ that disappears here because our partition function $Z_{N,\is}$ in \eqref{FNZN_def} includes a factor of $2^{-N}$ from the presence of the uniform probability measure $\mu_{N,\is}$ on $\{-1,+1\}^N$.}
Under the definition \eqref{beta_def}, this critical value converts to
\eeq{ \label{re8gc}
\sum_{s,t\in\SSS}2\Delta^2_{s,t}\lambda_{s}\lambda_{t} = 1.
}
When the model is balanced, \eqref{re8gc} coincides with the multi-species critical condition found in \cite{bates_sloman_sohn19} and generalized in \cite[display~(14)]{dey_wu21}.\footnote{In \cite{dey_wu21}, the Hamiltonian is again restricted to indices $i<j$.  This explains the extra factor of 2 in \eqref{re8gc}.  Also, in the notation of \cite{dey_wu21}, our balanced condition says that all the row sums of $\Delta^2\Lambda$ are identical, so the Perron--Frobenius theorem implies that the spectral radius of this matrix is equal to this row sum.}
If that critical value is correct (see the discussion in \cite[Section~8]{dey_wu21}), then the converse of \eqref{cov_single} is true for the multi-species SK model.

For other more general mixed $p$-spin models, the separation between high- and low-temperature phases is less understood.
Nevertheless, Corollary~\ref{cor_high} provides a mechanism for immediately using high-temperature conditions identified for single-species models, such as \cite[Theorem~16.3.1]{talagrand11b} in the Ising case, and \cite[Proposition~2.3]{talagrand06b} in the spherical case.

\subsection{Convex case} 
When the covariance function $\xi$ is convex, we can go beyond the high-temperature regime to all temperatures.
In fact, we only need convexity on the positive orthant, thanks to a multi-species version of Talagrand's positivity principle stated in Lemma~\ref{pos_principle}.
The following result is proved in Section~\ref{subsec_upper_bound}.

\begin{corollary}[Equality in the convex case]\label{cor_convex}
Assume \eqref{converging_ratios}, \eqref{delta_decay}, \eqref{balanced}.
If the covariance function $\xi$ from \eqref{cov_c} is convex on $[0,1]^\SSS$, then for either $\sig\in\{\is,\sph\}$ we have
\eeq{ \label{eq_msk2_a}
\lim_{N\to\infty} F_{N,\sig}(\vc\Delta,\vc\lambda_N)=\lim_{N\to\infty} F_{N,\sig}^\single(\bbeta),
}
where $\bbeta = (\beta_p)_{p\ge1}$ is given by \eqref{beta_def}.
Furthermore,
\eeq{ \label{eq_msk2_b}
\lim_{N\to\infty} \GSE_{N,\sig}(\vc\Delta,\vc\lambda_N)
=\lim_{N\to\infty} \GSE_{N,\sig}^\single(\bbeta).
}
\end{corollary}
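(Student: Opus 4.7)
Since Theorem~\ref{thm_main} supplies the matching lower bound, the task is to prove
\[
\limsup_{N\to\infty} F_{N,\sig}(\vc\Delta,\vc\lambda_N) \;\le\; \lim_{N\to\infty} F^\single_{N,\sig}(\bbeta).
\]
The plan is a Guerra-type interpolation between $H_N$ and an auxiliary Hamiltonian $\tilde H_N$ with inverse temperatures $\bbeta$ from \eqref{beta_def}, both realized on $\Sigma_{N,\sig}$ using independent Gaussian coefficients. For $H_t = \sqrt{t}\,H_N + \sqrt{1-t}\,\tilde H_N$ and $\phi(t) = N^{-1}\E\log\!\int e^{H_t(\sigma)}\,\mu_{N,\sig}(\dd\sigma)$, Gaussian integration by parts, together with cancellation of the diagonal terms (using $\vc R(\sigma,\sigma)=\vc1$ and $\vc\lambda_N\to\vc\lambda$), yields
\[
\phi'(t) \;=\; -\tfrac{1}{2}\,\E\Big\langle \xi_N\!\big(\vc R(\sigma,\tau)\big) - \xi^\single\!\Big(\textstyle\sum_s\lambda_{s,N} R_s(\sigma,\tau)\Big)\Big\rangle_t + o(1).
\]

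The heart of the argument is to show that the integrand is nonnegative. After the usual small perturbation, a multi-species Talagrand positivity principle (Lemma~\ref{pos_principle}) restricts the Gibbs average to $\vc R \in [0,1]^\SSS$, where I claim the deterministic inequality
\[
\xi(\vc R) \;\ge\; \xi^\single\!\Big(\sum_s\lambda_s R_s\Big).
\]
Setting $q = \sum_s\lambda_s R_s$, the identity $\xi(q\vc 1) = \xi^\single(q)$ is immediate from \eqref{beta_def}, and convexity of $\xi$ on $[0,1]^\SSS$ supplies the tangent-line bound $\xi(\vc R) \ge \xi(q\vc 1) + \nabla\xi(q\vc 1)\cdot(\vc R - q\vc 1)$. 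The balanced condition \eqref{balanced} kills the linear correction: the formula in Example~\ref{ex_model}\ref{ex_model_3} shows $\partial_{x_t}\xi(q\vc 1) = \lambda_t B(q)$ with $B(q)$ independent of $t$, whence $\nabla\xi(q\vc 1)\cdot(\vc R - q\vc 1) = B(q)\sum_t\lambda_t(R_t - q) = 0$. Integrating $\phi'$ over $[0,1]$ then gives $\phi(1) \le \phi(0) + o(1)$.

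It remains to identify $\lim_N \phi(0)$ with $\lim_N F^\single_{N,\sig}(\bbeta)$. The Ising case is immediate since both free energies live on $\{\pm1\}^N$. In the spherical case, $\phi(0)$ is the free energy of $\tilde H_N$ on the product of spheres $\prod_s S_{\#\II_{s,N}}$ rather than on $S_N$; I would reconcile the two by observing that $\tilde H_N$ is itself a (trivially) balanced multi-species model with covariance $\sum_p\beta_p^2(\sum_s\lambda_sx_s)^p$, which is convex on $[0,1]^\SSS$, so the convex multi-species Parisi formula of \cite{bates_sohn22a} applies and---since it depends on the overlaps only through the scalar $\sum_s\lambda_sR_s$---collapses to the single-species Parisi formula at $\bbeta$. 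The ground-state statement \eqref{eq_msk2_b} then follows from \eqref{eq_msk2_a} by rescaling $\vc\Delta\mapsto t\vc\Delta$ and $\bbeta\mapsto t\bbeta$, dividing by $t$, and sending $t\to\infty$, with the interchange of limits controlled by the uniform estimate $F_N(t)/t \to \GSE_N$ and Gaussian concentration \eqref{borell_tis}. The main subtlety is the spherical identification of $\phi(0)$, which must be executed without circularly invoking the result being proved.
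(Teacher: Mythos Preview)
Your approach is correct but genuinely different from the paper's. The paper never interpolates between $H_N$ and a single-species Hamiltonian; instead it applies the multi-species Guerra bound (Proposition~\ref{prop:guerra}) directly to $\vc\Delta$, obtaining $\limsup_N F_{N,\sig}(\vc\Delta,\vc\lambda_N)\le\inf_{m,\bq}\PPP_\sig(m,\bq)$, and then restricts to diagonal sequences $\wh{\vc q}_j=q_j\vc1$, using the balanced condition to show $\PPP_\sig(m,\wh{\vc q})=\PPP^\single_\sig(m,q)$ (Lemma~\ref{lem:parisi:ftl:reduce}). Your key inequality $\xi(\vc R)\ge\xi^\single\big(\sum_s\lambda_sR_s\big)$ on $[0,1]^\SSS$ is correct and is in a sense the convexity-based dual of the paper's AM--GM inequality in Proposition~\ref{key_claim}: both exploit that the balanced hypothesis forces $\nabla\xi(q\vc1)$ to be proportional to $\vc\lambda$, but you use it as a tangent-plane identity, whereas the paper uses it inside a power-mean comparison. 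In the Ising case your route is strictly more elementary, since $\phi(0)=F^\single_{N,\is}(\bbeta)$ on the nose and the Parisi functional never appears. In the spherical case, however, you have not escaped the paper's machinery: the ``collapse'' of the multi-species Parisi formula for $\tilde H_N$ that you invoke is precisely Lemma~\ref{lem:parisi:ftl:reduce} applied to the constant-interaction model (where it is indeed trivial, since $\tilde\xi^s(\vc x)=(\xi^\single)'(\sum_t\lambda_tx_t)$ is manifestly species-independent), and you still need Proposition~\ref{prop:guerra} to get the upper bound on $\phi(0)$. So for $\sig=\sph$ your interpolation step is an extra layer on top of the paper's argument rather than a replacement for it; the paper simply applies Proposition~\ref{prop:guerra}$+$Lemma~\ref{lem:parisi:ftl:reduce} once, to $\vc\Delta$ itself. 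The zero-temperature passage is handled in the paper exactly as you suggest, via Lemma~\ref{lem_free_to_GSE}.
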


\begin{remark}[Single-species formulas] \label{rmk_spar}
The right-hand sides of \eqref{eq_msk2_a} and \eqref{eq_msk2_b} are given by Parisi variational formulas: 
\begin{itemize}
    \item In the Ising case, see \cite{talagrand06a,panchenko14} for the free energy, and \cite{auffinger_chen17} for the ground state energy.
    \item In the spherical case, see \cite{talagrand06b,chen13,huang_sellke23b_arxiv} for the free energy, and \cite{chen_sen17,jagannath_tobasco17b} for the ground state energy.
\end{itemize}
Using Corollary~\ref{cor_convex} in tandem with these works yields variational expressions for the left-hand sides of \eqref{eq_msk2_a} and \eqref{eq_msk2_b} that are much simpler than the general formulas established in \cite{panchenko15,bates_sohn22a}.
Nevertheless, our proof will use the upper bounds from \cite{barra_contucci_mingione_tantari15,panchenko15,bates_sohn22a}, recalled as Proposition~\ref{prop:guerra}.
Moreover, an optimizer can be found by simply lifting the optimizer for the single-species formula (see Lemma~\ref{lem:parisi:ftl:reduce}).
Whether the former optimizer enjoys the same uniqueness property as the latter \cite{bovier_klimovsky09,auffinger_chen15b,jagannath_tobasco16} is not clear, but see \cite{chen_issa25_arxiv} for related progress.
\QED
\end{remark}

\begin{remark}[Connections to algorithmic symmetry breaking]
For the multi-species spherical spin glass model, Huang and Sellke characterized the algorithmic threshold energy, demonstrating that approximate message passing achieves this threshold~\cite{huang_sellke24}, whereas no algorithm that is Lipschitz with respect to the disorder can surpass it~\cite{huang_sellke23a_arxiv}. This threshold is expressed through an extended variational principle, initially established for Ising mixed $p$-spin glasses by El Alaoui, Montanari, and Sellke~\cite{elalaoui_montanari_sellke21}. 
In the multi-species setting, the synchronization map from \cite{panchenko15,bates_sohn22a} plays the role of the functional order parameter in this variational principle. 
Interestingly, as observed in \cite[Section~1.4]{huang_sellke23a_arxiv}, the algorithmic threshold for some balanced spherical spin glasses (even some which are convex and exchangeable in the sense of Example~\ref{ex_model}\ref{ex_model_1}) can be attained by a synchronization map which is not symmetric in the species, a phenomenon termed \emph{algorithmic symmetry breaking}. 
While Corollary~\ref{cor_convex} establishes the equivalence of the balanced multi-species model to a single-species model from the free energy perspective, this algorithmic symmetry breaking hints at potentially significant qualitative differences in their Gibbs measures. 
We leave this intriguing possibility for future investigation.
\QED
\end{remark}

\subsection{Bipartite spherical models} 
The only nonconvex model for which $(F_{N,\sig}(\vc\Delta,\vc \lambda_N))_{N\ge1}$ is known to converge at all temperatures (i.e.\ for any scalar multiple of $\vc\Delta$) is the bipartite spherical model \cite{baik_lee20}.
In the balanced version of this model, our lower bound is actually the correct free energy.
We document this fact in the following example. 

\begin{example}[Balanced bipartite spherical SK model] \label{rem_bipartite_SK}
Consider the two-species case $\SSS = \{1,2\}$ with equal-size species and only cross-species two-spin interaction:
\eeq{ \label{bipartite_parameters}
\lambda_1=\lambda_2=\tfrac12, \qquad
\xi(x_1,x_2) = \beta^2 x_1x_2.
}
Regarding $\beta^2$ as $\beta_2^2$ from \eqref{beta_def}, this corresponds to
\eq{
\Delta_{1,2}^2 = 2\beta^2, \qquad
\Delta_{1,1} = \Delta_{2,2} = 0, \qquad
\Delta_{s_1,\dots,s_p} = 0 \quad \text{for all $p\neq2$.}
}
It is known that if $\lambda_{1,N} \to \tfrac12$ as $N\to\infty$, then
\eeq{ \label{uc7n}
\lim_{N\to\infty}F_{N,\sph}(\vc\Delta,\vc\lambda_N) 
= \lim_{N\to\infty} F_{N,\sph}^\single(\beta). 
}
So in this case, the inequality \eqref{eq:thm:msk} is in fact an equality.
More precisely, both sides of \eqref{uc7n} are equal to
\eeq{ \label{ehc6b}
\begin{cases}
\tfrac12\beta^2 &\text{if $0\le\beta\le \frac{1}{\sqrt{2}}$} \\
\sqrt{2}\beta - \tfrac12\log(\sqrt{2}\beta)-\tfrac34 &\text{if $\beta\ge \frac{1}{\sqrt{2}}$.}
\end{cases}
}
The computation of the right-hand side of \eqref{uc7n} was first done non-rigorously in \cite{kosterlitz_thouless_jones76}.
A formula for $p$-spin models (consistent with \cite{kosterlitz_thouless_jones76} when $p=2$) was proposed in \cite{crisanti_sommers92} and then verified rigorously
in \cite{talagrand06b} (allowing only even $p$) and \cite{chen13} (allowing all $p$).
Meanwhile, the left-hand side of \eqref{uc7n} was computed in \cite[Theorem~2.1]{baik_lee20}.\footnote{When multiplied by $\beta$, the Hamiltonian in \cite[display~(1.3)]{baik_lee20} corresponds in our setting to $\xi(x_1,x_2) = \big(\frac{\beta}{2}\big)^2x_1x_2$. For this reason, we replace the $\beta$ appearing in \cite[Theorem~2.1]{baik_lee20} with $2\beta$.}
While \cite{baik_lee20} requires $\lambda_{1,N} = \tfrac12 + O(N^{-1})$ because their law of large numbers follows from a fluctuation result (see \cite[Remark~2.4]{baik_lee20}), there is an alternative formula \cite[Theorem~2.2]{genovese_tantari20} that assumes only $\lambda_{1,N}\to\frac12$.
Simply knowing the convergence of the free energy by \cite{genovese_tantari20} is enough to extend \cite[Theorem~2.1]{baik_lee20} to any sequence $(\lambda_{1,N})_{N\ge1}$ converging to $\frac12$.
\QED
\end{example}

The next example generalizes the previous one.
Although a matching upper bound for the free energy is not presently available in this more general setting, there is a matching upper bound for the ground state energy.
This results in Corollary~\ref{cor_bpbs}, which is stated after introducing the following model.

\begin{example}[Balanced pure bipartite spherical model] \label{rem_bipartite_pure}
Fix integers $p,q\ge 1$.
Consider the two-species case $\SSS = \{1,2\}$ with
\begin{subequations}
\label{wfe78}
\eeq{ \label{wfe78_a}
\lambda_1=\frac{p}{p+q}, \quad \lambda_2 = \frac{q}{p+q}, \qquad
\xi(x_1,x_2) = \beta^2x_1^px_2^q.
}
Regarding $\beta^2$ as $\beta_{p+q}^2$ from \eqref{beta_def}, this corresponds to the model such that
\eeq{ \label{wfe78_b}
\Delta^2_{s_1,\dots,s_{r}} &\neq 0 \quad \text{only when $r=p+q$, $\#\{j:\, s_j = 1\} = p$, $\#\{j:\, s_j = 2\} = q$,} \\
\beta^2 &= \binom{p+q}{p}\frac{p^pq^q}{(p+q)^{p+q}}\Delta^2_{1,\dots,1,2,\dots,2}.
}
\end{subequations}
This is a special case of the model considered in \cite{auffinger_chen14,genovese22}, which computed the limiting free energy for sufficiently small $\beta$.
Here we allow arbitrary $\beta$ and get a one-sided bound from Theorem~\ref{thm_main}.
More specifically, using \cite[Theorem~1.1]{talagrand06b} for the single-species free energy (which is justified by \cite{chen13} when $p+q$ is odd) and invoking \cite[Proposition~2.2]{talagrand06b}, our bound \eqref{eq:thm:msk} becomes
\eeq{ \label{8fvcg}
&\liminf_{N\to\infty} F_{N,\sph}(\vc\Delta,\vc\lambda_N) 
\\
&\ge \inf_{m\in(0,1], a\in[0,1)}\frac{1}{2}\Big[\beta^2\big(1-(1-m)a^{p+q}\big)+\frac{1}{m}\log\Big(1+\frac{ma}{1-a}\Big) + \log(1-a)\Big].
}
When $p=q=1$, the right-hand side of \eqref{8fvcg} can be solved to yield \eqref{ehc6b}.
For $p+q\ge3$, the right-hand side of \eqref{8fvcg} has an alternative expression in \cite[Theorem~2]{subag17b}.
\QED
\end{example}

The ground state energy of the model from Example~\ref{rem_bipartite_pure} is now also accessible.
An upper bound was already established in \cite{mckenna24}, and our lower bound matches it:

\begin{corollary}[Ground state energy for balanced pure bipartite spherical models] \label{cor_bpbs}
Fix integers $p,q\ge2$ and consider the two-species model from \eqref{wfe78}.
Assume the prelimiting volume parameters satisfy
\eeq{ \label{volume_assumption}
N = n(p+q)+2, \quad \#\II_{1,N} = np+1, \quad \#\II_{2,N} = nq+1 \quad
\text{for some $n\in\Z_{\ge0}$.}
}
We then have
\eeq{ \label{cor_bpbs_eq}
\lim_{n\to\infty}\GSE_{N,\sph}(\vc\Delta,\vc\lambda_N) = \GSE_{N,\sph}^\single(\beta).
}
\end{corollary}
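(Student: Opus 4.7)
The plan is to sandwich $\GSE_{N,\sph}(\vc\Delta,\vc\lambda_N)$ between the lower bound from Theorem~\ref{thm_main} and the matching upper bound established by McKenna~\cite{mckenna24} via a first-moment Kac--Rice computation.

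First I would verify that the parameters in \eqref{wfe78} satisfy the balanced condition \eqref{balanced}. By Example~\ref{ex_model}\ref{ex_model_3} this reduces to checking $\lambda_t^{-1}\partial_{x_t}\xi(\vc 1)=(p+q)\beta^2$ for $t\in\{1,2\}$, which is immediate from $\lambda_1=p/(p+q)$ and $\lambda_2=q/(p+q)$. Hypotheses \eqref{converging_ratios} and \eqref{delta_decay} hold trivially for the monomial $\xi$ together with the volumes fixed by \eqref{volume_assumption}. Applying the ground-state conclusion \eqref{GSE_lower} of Theorem~\ref{thm_main} with $\beta_{p+q}=\beta$ and all other $\beta_r=0$ then yields
$$\liminf_{n\to\infty}\GSE_{N,\sph}(\vc\Delta,\vc\lambda_N)\;\ge\;\lim_{N\to\infty}\GSE_{N,\sph}^\single(\beta),$$
where convergence on the right-hand side comes from the spherical Parisi ground-state formulas (Remark~\ref{rmk_spar}).

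For the upper bound I would invoke \cite{mckenna24}. The volume assumption \eqref{volume_assumption} is precisely what makes $S_{np+1}\times S_{nq+1}$ have ambient dimension $N$ and total manifold dimension $n(p+q)=N-2$, which is what renders the first moment of the critical-point count on this product of spheres amenable to closed-form evaluation. McKenna's analysis produces an explicit threshold energy $E^\star(\beta,p,q)$ above which the expected number of critical values of $H_N/N$ decays subexponentially; combined with the Borell--TIS concentration \eqref{borell_tis} recalled in Remark~\ref{rmk_conc}, this gives
$$\limsup_{n\to\infty}\GSE_{N,\sph}(\vc\Delta,\vc\lambda_N)\;\le\;E^\star(\beta,p,q).$$
Chaining the two bounds, the remaining step---and the main obstacle---is to identify $E^\star(\beta,p,q)$ with the pure spherical Crisanti--Sommers value $\lim_{N\to\infty}\GSE_{N,\sph}^\single(\beta)$ for the single-species $(p+q)$-spin model at inverse temperature $\beta$. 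Both sides admit closed-form expressions, so this is an algebraic check; the balanced species ratios $\lambda_1=p/(p+q)$, $\lambda_2=q/(p+q)$ are exactly what align the bipartite and single-species variational data, making this identification structural rather than accidental. Once verified, the two inequalities close to the equality \eqref{cor_bpbs_eq}.
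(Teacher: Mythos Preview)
Your approach is essentially the same as the paper's: lower bound from Theorem~\ref{thm_main}, upper bound from McKenna's first-moment complexity analysis, then match. The only point worth clarifying is the step you flag as ``the main obstacle.'' In fact there is no algebraic check to do: McKenna's threshold is \emph{already} expressed as $\beta E_0(p+q)$, where $E_0$ is the single-species complexity threshold from \cite{auffinger_benarous_cerny13} (see \eqref{93gx} in Remark~\ref{rmk_complexity}). The identification of $E_0(p+q)$ with $\lim_{N\to\infty}\GSE_{N,\sph}^\single(\beta)$ is then the known limit \eqref{3r8gx}, due to \cite{auffinger_benarous_cerny13,subag17a}. So the matching is immediate once you quote the right results; it is not a separate computation aligned by the balanced ratios. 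A minor procedural note: to pass from McKenna's exponential tail bound to a bound on the \emph{expectation} $\GSE_{N,\sph}$, the paper uses Borel--Cantelli on the tail plus the Borell--TIS concentration \eqref{borell_tis}, rather than Borell--TIS alone.
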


The proof of Corollary~\ref{cor_bpbs} is provided in Section~\ref{subsec_proof_bipartite}.

\begin{remark}[Convergence rate of species ratios]
The assumption \eqref{volume_assumption} comes from \cite[display~(4)]{mckenna24}, but it is natural to expect that 
$\lambda_{1,N}\to\frac{p}{p+q}$ 
is sufficient.
Indeed, this is true if one can show that
\eeq{ \label{xkg3x}
\limsup_{N\to\infty}|\GSE_{N,\sph}(\vc\Delta,\vc\lambda_N)-\GSE_{N,\sph} (\vc\Delta,\wt{\vc\lambda}_N)| = 0
}
for any two sequences $(\vc\lambda_N)_{N\ge1}$ and $(\wt{\vc\lambda}_N)_{N\ge1}$ with the same limit \eqref{converging_ratios}.
We do not doubt the validity of \eqref{xkg3x} or the corresponding statement for free energy, but presently we do not have a proof.
\QED
\end{remark}

\begin{remark}[Complexity perspective] \label{rmk_complexity}
The upper bound ($\le$) for \eqref{cor_bpbs_eq} was obtained in \cite{mckenna24} by computing a variational formula for the limiting expected number of critical points of $H_N$.
In the balanced case (i.e.\ $\lambda_1 = p/(p+q)$), this formula could be solved, and then a first-moment method produced the upper bound
\eeq{ \label{93gx}
\lim_{N\to\infty}\GSE_{N,\sig}(\vc\Delta,\vc\lambda_N) \le \beta E_0(p+q),
}
where $E_0$ is the value at which ``complexity vanishes'' in the single-species model; see \cite[display~(2.22)]{auffinger_benarous_cerny13}.
From \cite[Theorem~2.12]{auffinger_benarous_cerny13} (see also \cite[Appendix~D]{subag17a}), it is known that $E_0(p)$ is the limiting ground state energy of a single-species pure $p$-spin model:
\eeq{ \label{3r8gx}
\text{for all $p\geq3$,} \quad 
E_0(p) = \lim_{N\to\infty}\frac{1}{N}\max_{\sigma\in S_N}\frac{1}{N^{(p-1)/2}}\sum_{i_1,\dots,i_p=1}^N g_{i_1,\dots,i_p}\sigma_{i_1}\cdots\sigma_{i_p} \quad \mathrm{a.s.}
}
See \cite[Theorem~1]{subag23a} for an explicit expression for $E_0(p)$, or \cite[Proposition~3]{chen_sen17} for an alternative computation of the right-hand side of \eqref{3r8gx}.
\QED
\end{remark}

\begin{remark}[Hypothesis on number of spins]
\label{rmk:sec:moment}
The assumption $p,q\ge2$ in Corollary~\ref{cor_bpbs} comes from \cite{mckenna24}, although it was suggested in  \cite[Remark~2.11]{mckenna24} that this restriction (i.e.\ avoiding $p=1$ or $q=1$) is technical rather than fundamental.
Incidentally, \eqref{cor_bpbs_eq} remains true for $(p,q)=(1,1)$ because of \eqref{uc7n} and Lemma~\ref{lem_free_to_GSE}.
So the only excluded cases are $(p,1)$ or $(1,q)$.
When $\min(p,q)\ge96$, Kivimae \cite{kivimae23} carried out a second-moment complexity analysis proving that \eqref{93gx} is actually an equality, so Corollary~\ref{cor_bpbs} is not new in this parameter regime.
Moreover, \cite[Corollary~1]{kivimae23} gives a limiting ground state energy for any $\lambda_1 \in (0,1)$, not just $\lambda_1 = p/(p+q)$.
\QED
\end{remark}

\subsection{Injective norm of Gaussian random tensors}
The injective norm of a tensor is a generalization of the spectral norm of a matrix.
There has been recent interest in understanding the injective norm of \textit{random} tensors, e.g.\ \cite{zhou_zhu21,bandeira_gopi_jiang_lucca_rothvoss24_arxiv,boedihardjo24_arxiv}.
Here we give a new asymptotic result for Gaussian random tensors.

For positive integers $p$ (the ``order'') and $d$ (the ``dimension''), consider a tensor $T = (T_{i_1,\dots,i_p})_{1\le i_1,\dots,i_p\le d}$ whose entries are independent standard normal random variables.
Denote the unit sphere in $\R^d$ by 
\eq{
\S^{d-1} \coloneqq \{u\in\R^d:\, \|u\|_2 = 1\}.
}
The \textit{injective norm} of $T$ is defined as
\eeq{ \label{def_inj_norm}
\|T\|_{\inj} 
&\coloneqq \max_{u^{(1)},\dots,u^{(p)}\in\S^{d-1}}\iprod{T}{u^{(1)}\otimes\cdots\otimes u^{(p)}} \\
&= \max_{u^{(1)},\dots,u^{(p)}\in\S^{d-1}}\sum_{i_1,\dots,i_p=1}^d T_{i_1,\dots,i_p}u^{(1)}_{i_1}\cdots u^{(p)}_{i_p}.
 }
As $d\to\infty$, it was determined in \cite{nguyen_drineas_tran15,tomioka_suzuki14} that $\|T\|_\inj = O(\sqrt{d})$.
Recently Dartois and McKenna \cite{dartois_mckenna24_arxiv} gave a limiting upper bound on $\frac{1}{\sqrt{d}}\|T\|_\inj$ and predicted their bound was tight.
Using the lower bound from Theorem~\ref{thm_main}, we now confirm their prediction:

\begin{corollary}[Injective norm of large Gaussian tensor of order $p$] \label{cor_inj_norm}
Assume $p\ge3$ and the entries of $T = (T_{i_1,\dots,i_p})_{1\le i_1,\dots,i_p\le d}$ are independent standard normal random variables.
Then
\eeq{ \label{8unz}
\lim_{d\to\infty} \frac{1}{\sqrt{d}}\|T\|_\inj = \sqrt{p}E_0(p) \quad \mathrm{a.s.},
}
where $E_0(p)$ is the limiting ground state energy from Remark~\ref{rmk_complexity}.
\end{corollary}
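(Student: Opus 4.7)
The plan is to view $d^{-1/2}\|T\|_\inj$ as a constant multiple of the ground state energy of a specific balanced $p$-partite pure spherical spin glass, apply the GSE lower bound \eqref{GSE_lower} of Theorem~\ref{thm_main}, invoke the Kac--Rice upper bound of \cite{dartois_mckenna24_arxiv}, and upgrade expectation convergence to almost-sure convergence via Borell--TIS.

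Concretely, I would take $\SSS = \{1,\ldots,p\}$, $N = pd$, and $\#\II_{s,N} = d$ for each $s$, so that $\lambda_{s,N} = \lambda_s = 1/p$. Fix $\alpha > 0$ and set $\Delta^2_{s_1,\ldots,s_p} = \alpha$ whenever $(s_1,\ldots,s_p)$ is a permutation of $(1,\ldots,p)$, with all other couplings equal to zero. The resulting covariance is $\xi(\vc x) = \beta_p^2\prod_{s=1}^p x_s$ with $\beta_p^2 = p!\,\alpha/p^p$ per \eqref{beta_def}, and the model is balanced by Example~\ref{ex_model}\ref{ex_model_1} (the species are exchangeable under any permutation of $\SSS$). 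Both hypotheses \eqref{converging_ratios} and \eqref{delta_decay} are trivially satisfied.

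The core of the proof is a distributional identification. I would introduce unit vectors via $\sigma^{(s)}_j = \sqrt{d}\,u^{(s)}_j$, so that $\sigma$ ranges over $\otimes_{s} S_d$ precisely when $u^{(s)} \in \S^{d-1}$. The key observation is that the Gaussians in \eqref{MSKHamiltonian} attached to distinct permutations of the species live on disjoint index sets $\II_{\pi(1),N}\times\cdots\times\II_{\pi(p),N}$; after reindexing each permutation's contribution via $j_s = i_{\pi^{-1}(s)}$ and combining the $p!$ independent standard Gaussian fields (which yields a factor of $\sqrt{p!}$), one finds that, in distribution,
\eq{
H_N(\sigma) = \sqrt{p}\,\beta_p\,\sqrt{d}\,\langle \tilde T,\, u^{(1)}\otimes\cdots\otimes u^{(p)}\rangle,
}
where $\tilde T$ has the same law as $T$. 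Taking the maximum over $\sigma$ and dividing by $N = pd$ yields the clean identity
\eq{
\GSE_{N,\sph}(\vc\Delta,\vc\lambda_N) = \frac{\beta_p}{\sqrt{p}}\cdot\frac{\E\|T\|_\inj}{\sqrt{d}}.
}
Since the limiting single-species pure $p$-spin ground state energy is $\lim_N \GSE^\single_{N,\sph}(\bbeta) = \beta_p\,E_0(p)$ (by \eqref{3r8gx} and the linearity of the Hamiltonian in its coupling constant), the bound \eqref{GSE_lower} immediately yields $\liminf_{d} d^{-1/2}\,\E\|T\|_\inj \ge \sqrt{p}\,E_0(p)$.

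The matching $\limsup$ is precisely the Kac--Rice first-moment estimate of \cite{dartois_mckenna24_arxiv}, so the expectations converge to $\sqrt{p}\,E_0(p)$. Borell--TIS (Remark~\ref{rmk_conc}) applied to $H_N$ transfers, via the identification above, to Gaussian concentration of $d^{-1/2}\|T\|_\inj$ around its mean at a rate summable in $d$; Borel--Cantelli then upgrades this to the almost-sure statement \eqref{8unz}. The main potential pitfall is purely arithmetic: one must track the factors of $p!$, $p^{(p-1)/2}$, $d^{p/2}$, and $N^{(p-1)/2}$ carefully so that every $\alpha$ and $\beta_p$ cancels and precisely $\sqrt{p}$ survives as the prefactor in the final identity.
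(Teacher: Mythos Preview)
Your proposal is correct and follows essentially the same route as the paper: identify $d^{-1/2}\|T\|_\inj$ with the ground state energy of a balanced $p$-partite pure spherical model (the paper isolates this as Lemma~\ref{lem_translate}, choosing $\alpha=p^{p+1}/p!$ so that $\beta_p=\sqrt p$ and the prefactor becomes $1$, and proving the identification via covariance matching rather than your explicit permutation decomposition), apply \eqref{GSE_lower} for the lower bound, invoke \cite{dartois_mckenna24_arxiv} for the upper bound, and finish with Borell--TIS plus Borel--Cantelli. One small point of order: the result you cite from \cite{dartois_mckenna24_arxiv} is a large-deviations probability bound, not directly an expectation bound, so in practice you pass to the almost-sure statement first (via Borel--Cantelli on both tails) rather than first establishing convergence of expectations---but this is a cosmetic rearrangement of the same ingredients.
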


The proof is provided in Section~\ref{subsec_proof_inj_norm}.
The reason Theorem~\ref{thm_main} applies is that the injective norm can be regarded as the (quenched) ground state energy of a certain multi-species spin glass; this is recorded in Lemma~\ref{lem_translate}.
The \textit{multi}-species structure is essential because $T$ is \textit{non}symmetric.
If $T$ were symmetrized, then $\|T\|_\inj$ would correspond to a single-species spin glass, and \eqref{8unz} would reduce to the previously known \eqref{3r8gx}.

\begin{remark}[Related work]
\label{rmk:DM25}
In a forthcoming paper by Dartois and McKenna~\cite{dartois_mckenna25_prep}, a similar statement to Corollary~\ref{cor_inj_norm} will be obtained by an entirely different method.
As communicated to us, their result is conditional on a certain variational fact (which is verifiable for small $p$).
On the other hand, their techniques may be able to handle the more complicated case where the entries of $T$ are complex Gaussians. 
Moreover, their methodology will lend insights into the critical points of the function $(u^{(1)}\otimes\cdots\otimes u^{(p)})\mapsto\iprod{T}{u^{(1)}\otimes\cdots\otimes u^{(p)}}$, whereas our approach is purely about the free energy.
\QED
\end{remark}

\begin{remark}[Other regimes of interest]
If $p = 1$, then $\|T\|_\inj$ is simply the $\ell^2$ norm of the vector $T = (T_1,\dots,T_d)$, which grows like $\sqrt{d}$ by the strong law of large numbers.

If $p = 2$, then $\|T\|_\inj$ is the spectral norm of a real $d\times d$ Ginibre matrix with i.i.d.\ $\mathcal{N}(0,1)$ entries, which grows like $2\sqrt{d}$ by \cite[Theorem~3.1]{yin_bai_krishnaiah88}.
Therefore, \eqref{3r8gx} and \eqref{8unz} still hold if we take the convention $E_0(2) = \sqrt{2}$ as in \cite{dartois_mckenna24_arxiv}, although this is a special case differing from the complexity-based definitions discussed in Remark~\ref{rmk_complexity}.
See \cite[Remark~2.11]{auffinger_benarous_cerny13} and \cite[Lemma~2.7]{dartois_mckenna24_arxiv} for further discussion.

Also considered in \cite{dartois_mckenna24_arxiv} is the case of fixed $d$ and $p\to\infty$, which is natural in quantum information, especially when the tensor is complex-valued.
Our result does not address this scenario or allow complex tensors, but \cite[Theorem~1.1]{dartois_mckenna24_arxiv} provides upper bounds in those settings as well.
\QED
\end{remark}

\section{Proof of main result: energy lower bound} \label{sec_lower_bound}

Our proof of Theorem~\ref{thm_main} will compare models with different values of the interaction parameters $\vc\Delta$.
In order to make the dependence on $\vc\Delta$ explicit, we will write $H_{N,\vc\Delta}$, $\xi_{N,\vc\Delta}$, $\xi_{\infty,\vc\Delta}$ for the quantities in \eqref{MSKHamiltonian}, \eqref{cov_b}, \eqref{cov_c}, respectively. The only use of the balanced assumption \eqref{balanced} is in Proposition~\ref{key_claim}.

In the proposition below, inequality \eqref{rh8ef1z} provides a lower bound on $F_{N}(\vc\Delta)$ in terms of $F_{N}(\wt{\vc\Delta})$, where $\wt{\vc\Delta}$ is some other collection of interaction parameters.
By exchanging the roles of $\vc\Delta$ and $\wt{\vc\Delta}$, one obtains the two-sided inequality \eqref{rh9ef1z}, which shows Lipschitz continuity of the map $\vc\Delta\mapsto F_{N}(\vc\Delta)$ with respect to the metric
\eq{
(\vc\Delta,\wt{\vc\Delta})\mapsto\sup_{\vc x\in[-1,1]^\SSS}|\xi_{N,\vc\Delta}(\vc x)-\xi_{N,\wt{\vc\Delta}}(\vc x)|.
}
Inequalities \eqref{rh6ef1z} and \eqref{rh7ef1z} are the analogous statements but with $[0,1]^\SSS$ replacing $[-1,1]^\SSS$, which incurs an additional error that vanishes as $N\to\infty$.
We emphasize that \eqref{rh8ef1z} and \eqref{rh6ef1z} do not have absolute values; this is important in the proof of Theorem~\ref{thm_main}, specifically at \eqref{4h9fxx}.

\begin{proposition}[Comparing different interaction parameters] \label{prop_key}
Assume that both $(\vc \Delta,\vc\lambda_N)$ and $(\wt{\vc\Delta},\vc\lambda_N)$ satisfy \eqref{delta_decay_finite}.
For either $\sig\in\{\is,\sph\}$, we have
\begin{subequations} \label{without_pos}
\begin{align}
\label{rh8ef1z}
F_{N,\sig}(\wt{\vc\Delta},\vc\lambda_N)- F_{N,\sig}(\vc\Delta,\vc\lambda_N) 
&\leq \frac{1}{2}\Big[\xi_{N,\wt{\vc\Delta}}(\vc1)-\xi_{N,\vc\Delta}(\vc 1) \nonumber \\
&\phantom{\le\frac12\Big[}+ \sup_{\vc x\in[-1,1]^\SSS}\big(\xi_{N,\vc\Delta}(\vc x)-\xi_{N,\wt{\vc\Delta}}(\vc x)\big)\Big], \\
\label{rh9ef1z}
\text{and}\quad |F_{N,\sig}(\vc\Delta,\vc\lambda_N) - F_{N,\sig}(\wt{\vc\Delta},\vc\lambda_N)| &\leq \sup_{\vc x\in[-1,1]^\SSS} |\xi_{N,\vc\Delta}(\vc x)-\xi_{N,\wt{\vc\Delta}}(\vc x)|.
\end{align}
\end{subequations}
In the large-$N$ limit: if \eqref{converging_ratios}  holds, and both $(\vc\Delta,\vc\lambda)$ and $(\wt{\vc\Delta},\vc\lambda)$ satisfy \eqref{delta_decay}, then
\begin{subequations} \label{with_pos}
\begin{align}
\label{rh6ef1z}
\limsup_{N\to\infty}\big(F_{N,\sig}(\wt{\vc\Delta},\vc\lambda_N)-F_{N,\sig}(\vc\Delta,\vc\lambda_N)\big)
&\le\frac{1}{2}\Big[\xi_{\infty,\wt{\vc\Delta}}(\vc1)-\xi_{\infty,\vc\Delta}(\vc 1) \nonumber \\
&\phantom{\le\frac12\Big[}+\sup_{\vc x\in[0,1]^\SSS}\big(\xi_{\infty,\vc\Delta}(\vc x)-\xi_{\infty,\wt{\vc\Delta}}(\vc x)\big)\Big], \\
 \label{rh7ef1z}
\text{and}\quad\limsup_{N\to\infty}|F_{N,\sig}(\wt{\vc\Delta},\vc\lambda_N)-F_{N,\sig}(\vc\Delta,\vc\lambda_N)|
&\le\sup_{\vc x\in[0,1]^\SSS}|\xi_{\infty,\vc\Delta}(\vc x)-\xi_{\infty,\wt{\vc\Delta}}(\vc x)|.
\end{align}
\end{subequations}
\end{proposition}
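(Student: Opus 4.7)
The approach is Guerra-style Gaussian interpolation between the two models, paired with a multi-species positivity principle to upgrade $[-1,1]^\SSS$ to $[0,1]^\SSS$ in the asymptotic statements. Construct independent copies of $H_{N,\vc\Delta}$ and $H_{N,\wt{\vc\Delta}}$ on a common probability space and, for $t\in[0,1]$, set
\[
H_{N,t}(\sigma) \coloneqq \sqrt{1-t}\, H_{N,\vc\Delta}(\sigma) + \sqrt{t}\, H_{N,\wt{\vc\Delta}}(\sigma),
\]
so that the covariance is $K_t(\sigma,\tau) = N\bigl[(1-t)\xi_{N,\vc\Delta}(\vc R(\sigma,\tau)) + t\,\xi_{N,\wt{\vc\Delta}}(\vc R(\sigma,\tau))\bigr]$. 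Write $\varphi_N(t) \coloneqq \frac{1}{N}\E\log\int_{\Sigma_{N,\sig}}\exp H_{N,t}(\sigma)\,\mu_{N,\sig}(\dd\sigma)$, so that $\varphi_N(0) = F_{N,\sig}(\vc\Delta,\vc\lambda_N)$ and $\varphi_N(1) = F_{N,\sig}(\wt{\vc\Delta},\vc\lambda_N)$.

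Gaussian integration by parts, together with the diagonal identity $\vc R(\sigma,\sigma)=\vc 1$ from \eqref{91knbx}, yields
\[
\varphi_N'(t) = \tfrac{1}{2}\bigl[\xi_{N,\wt{\vc\Delta}}(\vc 1) - \xi_{N,\vc\Delta}(\vc 1)\bigr] - \tfrac{1}{2}\E\bigl\langle \xi_{N,\wt{\vc\Delta}}(\vc R(\sigma,\tau)) - \xi_{N,\vc\Delta}(\vc R(\sigma,\tau))\bigr\rangle_t,
\]
where $\langle\cdot\rangle_t$ denotes the two-replica Gibbs average associated with $H_{N,t}$. Since $\vc R(\sigma,\tau)\in[-1,1]^\SSS$, integrating over $t\in[0,1]$ and bounding the Gibbs integrand pointwise by $\sup_{\vc x\in[-1,1]^\SSS}(\xi_{N,\vc\Delta}(\vc x) - \xi_{N,\wt{\vc\Delta}}(\vc x))$ gives \eqref{rh8ef1z}. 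For \eqref{rh9ef1z}, apply \eqref{rh8ef1z} with the roles of $\vc\Delta$ and $\wt{\vc\Delta}$ exchanged; each of the two summands on the right of \eqref{rh8ef1z} is then dominated by $\tfrac{1}{2}\sup_{\vc x\in[-1,1]^\SSS}|\xi_{N,\vc\Delta}(\vc x)-\xi_{N,\wt{\vc\Delta}}(\vc x)|$ (using that $\vc 1\in[-1,1]^\SSS$), and combining the two one-sided inequalities yields \eqref{rh9ef1z}.

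For the asymptotic refinements \eqref{rh6ef1z} and \eqref{rh7ef1z}, the goal is to replace $[-1,1]^\SSS$ by $[0,1]^\SSS$ in the supremum as $N\to\infty$. To this end, perturb $H_{N,t}$ by a small Ghirlanda--Guerra-type mixed $p$-spin term whose contribution to the free energy is $o(1)$ uniformly in $t$, and which activates the multi-species Talagrand positivity principle (Lemma~\ref{pos_principle}) for the perturbed model. That principle guarantees that, along a subsequence of $N$, the limiting distribution of $\vc R(\sigma,\tau)$ under $\E\langle\cdot\rangle_t$ is supported in the positive orthant $[0,1]^\SSS$. Combined with the uniform convergence \eqref{uniform_cov_convergence} of $\xi_{N,\cdot}$ to $\xi_{\infty,\cdot}$, and a polynomial truncation of $\xi_{\infty,\wt{\vc\Delta}}-\xi_{\infty,\vc\Delta}$ justified by \eqref{delta_decay}, one passes to the limit inside $\int_0^1 \E\langle \xi_{N,\wt{\vc\Delta}}(\vc R)-\xi_{N,\vc\Delta}(\vc R)\rangle_t\,\dd t$ and restricts the pointwise bound to $[0,1]^\SSS$, yielding \eqref{rh6ef1z}. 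The bound \eqref{rh7ef1z} then follows by the same swapping argument used for \eqref{rh9ef1z}.

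The main technical issue is ensuring that the positivity-principle perturbation can be chosen \emph{uniformly} in $t\in[0,1]$, so that a single perturbation forces positive overlaps along the entire interpolation path without inflating the free-energy discrepancy beyond $o(1)$. This uniformity is not automatic, but it follows by an essentially standard multi-species adaptation of Talagrand's original argument, in the spirit of \cite{panchenko15,bates_sohn22a}; the decay hypothesis \eqref{delta_decay} reduces the question to controlling finitely many monomial overlap moments at a time.
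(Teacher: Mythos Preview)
Your proposal is correct and follows essentially the same Guerra-interpolation strategy as the paper: identical derivative computation for \eqref{rh8ef1z}, the same symmetry argument for \eqref{rh9ef1z} and \eqref{rh7ef1z}, and the same use of the multi-species positivity principle (Lemma~\ref{pos_principle}) to upgrade $[-1,1]^\SSS$ to $[0,1]^\SSS$.

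One point worth sharpening: the paper resolves the uniformity-in-$t$ issue you flag by a clean device rather than a subsequence or limiting-distribution argument. It adds \emph{two independent} perturbation Hamiltonians, one to each endpoint, so the interpolated perturbation $\sqrt{1-t}\,H_{N,\vc\delta_N}+\sqrt{t}\,\wt H_{N,\vc\delta_N}$ has the same law as $H_{N,\vc\delta_N}$ for every $t$. Since Lemma~\ref{pos_principle}\ref{pos_principle_e} is stated with a supremum over arbitrary (nonrandom) base Hamiltonians $H$, one conditions on the unperturbed disorder and applies the lemma uniformly in $t$. The paper then splits the Gibbs expectation over $\{\vc R\in(-\eps,1]^\SSS\}$ versus its complement, bounds the latter by $\eps\cdot\sup_{[-1,1]^\SSS}|\xi_{N,\vc\Delta}-\xi_{N,\wt{\vc\Delta}}|$, and sends $N\to\infty$ then $\eps\searrow0$. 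No polynomial truncation or subsequence extraction is needed; the uniform convergence \eqref{uniform_cov_convergence} handles the passage from $\xi_N$ to $\xi_\infty$ directly.
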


Bootstrapping from \eqref{without_pos} to \eqref{with_pos} will require a multi-species version of \mbox{Talagrand}'s positivity principle, recalled below as Lemma~\ref{pos_principle}.
To state the lemma, we need to define a Gibbs measure:
given some (nonrandom) Hamiltonian $H\colon\Sigma_{N,\sig}\to\R$, let $G_{N,H}$ be the probability measure on $\Sigma_{N,\sig}$ defined by
\eq{ 
G_{N,H}(\dd\sigma) \coloneqq \frac{\exp H(\sigma)}{\int_{\Sigma_{N,\sig}}\exp H(\sigma')\, \mu_N(\dd\sigma')}\ \mu_N(\dd\sigma).
}
The strategy is to introduce a collection of perturbation parameters \eqref{perturbation_parameters} that vanish uniformly in the large-$N$ limit, yet are large enough to engage (an averaged form of) the Ghirlanda--Guerra identities. 
These identities lead to the positivity principle \eqref{pos_prin_eq}.
To help the reader parse the notation, we reiterate that $H_{N,\vc\delta_N}$ is the Hamiltonian \eqref{MSKHamiltonian} but with $\vc\delta_N$ in place of $\vc\Delta$.
Also, $\sigma^1$ and $\sigma^2$ denote independent samples from the Gibbs measure.

\begin{lemma}[Positivity principle]
\label{pos_principle}
Assume \eqref{converging_ratios} and \eqref{delta_decay}.
There is a collection of random variables 
\eeq{ \label{perturbation_parameters}
(\vc\delta_N)_{N\ge1} = (\delta_{s_1,\dots,s_p,N})_{s_1,\dots,s_p\in\SSS,\, p\ge1,\, N\ge1}
}
such that the following statements hold with $\vc\Delta_N$ defined by $\Delta^2_{s_1,\dots,s_p,N} \coloneqq \Delta^2_{s_1,\dots,s_p} + \delta^2_{s_1,\dots,s_p,N}$.
\begin{enumerate}[label=\textup{(\alph*)}]

\item \label{pos_principle_a} For each $N$, the collection $\vc\delta_N$ is independent of the disorder $(g_{i_1,\dots,i_p})_{1\le i_1,\dots,i_p\le N,\,p\ge1}$.

\item \label{pos_principle_b} $\delta^2_{s_1,\dots,s_p,N} \leq \frac{1}{3\cdot 4^{p-1}}N^{-1/2}$ for all $s_1,\dots,s_p\in\SSS$, $p\ge1$, $N\ge1$.

\item \label{pos_principle_c} ${\vc\Delta}_N$ satisfies \eqref{delta_decay_finite} for all large $N$.

\item \label{pos_principle_d} $\sup_{\vc x\in[-1,1]^\SSS}|\xi_{N,\vc\Delta}(\vc x)-\xi_{N,{\vc\Delta}_N}(\vc x)|\to0$ as $N\to\infty$.

\item \label{pos_principle_e} For either $\sig\in\{\is,\sph\}$ and any $\eps>0$, we have 
\eeq{ \label{pos_prin_eq}
\lim_{N\to\infty}\sup_{H}\E_{\vc\delta} \E_g G_{N,H+H_{N,\vc\delta_N}}^{\otimes 2}\Big(\bigcup_{s\in\SSS}\{R_s(\sigma^1,\sigma^2)\leq -\eps\}\Big)=0,
}
where $\E_{\vc\delta}$ denotes expectation over $\vc\delta_N$, $\E_g$ denotes expectation over the disorder, and the supremum is over
measurable functions $H\colon \Sigma_{N,\sig}\to\R$ satisfying
\eq{ 
\int_{\Sigma_{N,\sig}}\exp |H(\sigma)|\ \mu_N(\dd\sigma) < \infty.
}
\end{enumerate}
\end{lemma}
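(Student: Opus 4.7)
The plan is to adapt the standard perturbation construction of Panchenko~\cite{panchenko14,panchenko15} together with the multi-species synchronization of~\cite{panchenko15,bates_sohn22a} and Talagrand's positivity principle~\cite{talagrand03c}. For each $p\geq 1$ and each tuple $(s_1,\dots,s_p)\in\SSS^p$, I would take $\delta^2_{s_1,\dots,s_p,N}$ to be mutually independent, and also independent of the disorder $(g_{i_1,\dots,i_p})$, each uniformly distributed on a subinterval of $[0,\,(3\cdot 4^{p-1})^{-1}N^{-1/2}]$ whose length decays fast enough in $p$ to dominate $(\#\SSS)^p$. Properties (a) and (b) are immediate from this construction, and (c)--(d) follow from the uniform estimate
\eq{
\sup_{\vc x\in[-1,1]^\SSS}|\xi_{N,\vc\Delta_N}(\vc x)-\xi_{N,\vc\Delta}(\vc x)|
\leq \sum_{p\geq 1}\sum_{s_1,\dots,s_p\in\SSS}\delta^2_{s_1,\dots,s_p,N}\lambda_{s_1,N}\cdots\lambda_{s_p,N} = O(N^{-1/2}).
}

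For (e), I would run an averaged Ghirlanda--Guerra argument. Because $H_{N,\vc\delta_N}$ is a centered Gaussian process with covariance $N\xi_{N,\vc\delta_N}(\vc R(\sigma,\tau))$, Gaussian integration by parts expresses the partial derivative of the perturbed free energy $\frac{1}{N}\E\log\int\exp(H+H_{N,\vc\delta_N})\,\dd\mu_N$ with respect to the parameter $\delta^2_{s_1,\dots,s_p,N}$ as a Gibbs average of the monomial $R_{s_1}(\sigma^1,\sigma^2)\cdots R_{s_p}(\sigma^1,\sigma^2)$, up to an $O(N^{-1})$ diagonal correction. Coupled with a Poincar\'e-type variance estimate in the uniformly distributed parameters $(\delta^2_{s_1,\dots,s_p,N})$, this yields approximate multi-species Ghirlanda--Guerra identities under the $\vc\delta_N$-averaged Gibbs measure; the argument is essentially the single-species template of~\cite{panchenko14}, modified as in~\cite{panchenko15,bates_sohn22a} to incorporate independent perturbation parameters for each species-tuple. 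Crucially, the base Hamiltonian $H$ enters only through the Gibbs measure of $H+H_{N,\vc\delta_N}$; the Gaussian calculus on the perturbation is unchanged by $H$, so all error terms are uniform in $H$.

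With approximate multi-species Ghirlanda--Guerra identities in hand, I would invoke Panchenko's synchronization theorem (cf.\ \cite[Theorem~4]{panchenko15} in the Ising case and the analogous result in~\cite{bates_sohn22a} for the product-of-spheres case): along any subsequential weak limit of the averaged two-replica Gibbs measure, the species-overlap vector $(R_s(\sigma^1,\sigma^2))_{s\in\SSS}$ is jointly distributed as $(\Phi_s(U))_{s\in\SSS}$, where $U\in[0,1]$ is a common scalar ``master'' overlap and each $\Phi_s$ is Lipschitz nondecreasing with $\Phi_s(0)=0$. Talagrand's positivity principle~\cite{talagrand03c}, applied to the scalar $U$ (equivalently to the weighted overlap $\sum_{s\in\SSS}\lambda_s R_s$), forces $U\geq 0$ almost surely in the limit; the monotonicity of each $\Phi_s$ with $\Phi_s(0)=0$ then propagates this to $R_s\geq 0$ for every $s\in\SSS$. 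Quantifying this limiting positivity at finite $N$ via the quantitative nature of the Ghirlanda--Guerra bounds from the previous step produces~\eqref{pos_prin_eq}.

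The main obstacle is ensuring the supremum in~\eqref{pos_prin_eq} over the arbitrary base Hamiltonian $H$. The resolution is that the Gaussian perturbation $H_{N,\vc\delta_N}$ is on its own sufficient to engage the Ghirlanda--Guerra machinery: the derivative identities depend on $H$ only through the averaged Gibbs measure of $H+H_{N,\vc\delta_N}$, and the Poincar\'e variance bound in the uniform parameters $\delta^2_{s_1,\dots,s_p,N}$ is independent of any reshaping of that Gibbs measure by $H$. Implementing this carefully---with simultaneous control of the overlaps across species via synchronization---parallels the single-species template of~\cite{panchenko14}, but the multi-species bookkeeping (and the fact that synchronization runs through a scalar master overlap rather than each species separately) is where the real technical effort will lie.
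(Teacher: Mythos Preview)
Your construction for (a)--(d) differs from the paper's but would work (up to enforcing the symmetry $\delta^2_{s_1,\dots,s_p,N}=\delta^2_{s_{\pi(1)},\dots,s_{\pi(p)},N}$). The paper instead sets $\delta^2_{s_1,\dots,s_p,N}=c_N^2\sum_{q\ge1}4^{-(p+q)}u_{p,q}^2\,w_{q,s_1}\cdots w_{q,s_p}$ with $c_N=N^{-1/4}$, $(u_{p,q})$ i.i.d.\ $\mathsf{Uniform}[1,2]$, and $(\vc w_q)_{q\ge1}$ a deterministic dense subset of $[0,1]^{\SSS}$, lifted from \cite[(3.5)--(3.7)]{bates_sohn22a}; it then simply cites \cite[Lemma~3.3]{bates_sohn22a} for (e).

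Your route to (e) has a genuine gap. You apply Talagrand's positivity to the master overlap $U=\sum_s\lambda_s R_s$ and then invoke synchronization $R_s=\Phi_s(U)$ with ``$\Phi_s$ nondecreasing and $\Phi_s(0)=0$'' to conclude $R_s\ge 0$. But synchronization does not supply $\Phi_s(0)=0$: the identity $\sum_s\lambda_s\Phi_s(u)=u$ at $u=0$ says only $\sum_s\lambda_s\Phi_s(0)=0$, which allows some $\Phi_s(0)<0$ compensated by others being positive. Positivity of $U$ alone therefore does not propagate to each species. More broadly, synchronization is a structural statement about the limiting array; it is neither needed for nor does it yield species-wise positivity.

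The repair is simpler than your detour, and already available under your own construction: taking the diagonal tuple $s_1=\cdots=s_p=s$, your perturbation gives Ghirlanda--Guerra identities for $R_s^p$ for every $p$, so Talagrand's positivity applies \emph{directly} to each species overlap $R_s$, uniformly in $H$. This is also the point of the paper's $\vc w_q$-construction: the perturbation covariance decomposes as $c_N^2\sum_{p,q}4^{-(p+q)}u_{p,q}^2\big(\sum_s w_{q,s}\lambda_{s,N}R_s\big)^p$, so one obtains single-species Ghirlanda--Guerra identities for each scalar weighted overlap $\sum_s w_{q,s}\lambda_{s,N}R_s$; Talagrand's positivity for these scalars, together with density of $(\vc w_q)$ in $[0,1]^{\SSS}$ (in particular near each coordinate direction), delivers $R_s\ge 0$ for every $s$.
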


Our proof simply defers to \cite{bates_sohn22a}; while that paper treats the spherical case, the argument also works in the Ising case after trivial notational changes. This multi-species positivity principle was also used in \cite[Lemma~7.5]{issa24_arxiv}.

\begin{proof}[Proof of Lemma~\ref{pos_principle}]
In the notation of \cite[displays~(3.5)--(3.7)]{bates_sohn22a}, take 
\eq{
\delta^2_{s_1,\dots,s_p,N} \coloneqq c_N^2\sum_{q=1}^\infty\frac{u_{p,q}^2}{4^{p+q}}w_{q,s_1}\cdots w_{q,s_p},
}
where $c_N = N^{-1/4}$, $(u_{p,q})_{p,q\ge1}$ are i.i.d.\ $\mathsf{Uniform}[1,2]$ random variables selected independently of the disorder, and $(\vc w_q)_{q\geq1} = \big((w_{q,s})_{s\in\SSS}\big)_{q\ge1}$ is a deterministic dense subset of $[0,1]^\SSS$.
Part~\ref{pos_principle_a} is true by construction.
Part~\ref{pos_principle_b} is obtained by using the trivial bounds $u_{p,q}^2\le 4$ and $w_{q,s}\le1$, and then summing a geometric series.
Part~\ref{pos_principle_c} follows from part~\ref{pos_principle_b} together with \eqref{converging_ratios} and \eqref{delta_decay}.
Part~\ref{pos_principle_d} also follows from part~\ref{pos_principle_b}, since
\eq{
|\xi_{N,\vc\Delta}(\vc x)- \xi_{N,{\vc\Delta}_N}(\vc x)|
&\stackref{cov_b}{\leq} \sum_{p=1}^\infty \sum_{s_1,\dots,s_p\in\SSS}\delta^2_{s_1,\dots,s_p,N}\lambda_{s_1,N}\cdots\lambda_{s_p,N}\graybrace{|x_{s_1}\cdots x_{s_p}|}{\mathclap{\le 1 \text{ if $\vc x\in[-1,1]^\SSS$}}} \\
&\stackrefp{cov_b}{\leq} \sum_{p=1}^\infty \frac{N^{-1/2}}{3\cdot 4^{p-1}}\sum_{s_1,\dots,s_p\in\SSS}\lambda_{s_1,N}\cdots\lambda_{s_p,N} \\
&\stackrefp{cov_b}{=} N^{-1/2}\sum_{p=1}^\infty\frac{1}{3\cdot 4^{p-1}}\Big(\sum_{s\in\SSS}\lambda_{s,N}\Big)^p
= \frac{4}{9}N^{-1/2}.
}
Finally, part~\ref{pos_principle_e} is \cite[Lemma~3.3]{bates_sohn22a}.
\end{proof}

\begin{proof}[Proof of Proposition~\ref{prop_key}]
For ease of notation, we drop the symbols $\vc\lambda_N$ and $\sig$.
Since $\vc\Delta$ and $\wt{\vc\Delta}$ are logically interchangeable, \eqref{rh9ef1z} and \eqref{rh7ef1z} follow from \eqref{rh8ef1z} and \eqref{rh6ef1z} respectively.
We present the proofs of \eqref{rh8ef1z} and \eqref{rh6ef1z} together, using a parameter $\chi\in\{0,1\}$ to keep track of the minor differences.
If $\chi=0$, then no perturbation is performed on the Hamiltonian, and we will obtain \eqref{rh8ef1z}.
If instead $\chi=1$, then we include the random perturbation parameters $(\vc\delta_N)_{N\ge1}$ from Lemma~\ref{pos_principle}, and we will obtain \eqref{rh6ef1z}.
We assume throughout that $\vc\delta_N$ is independent of all Gaussian disorder.

As in Lemma~\ref{pos_principle}, define $\vc\Delta_N$ and $\wt{\vc\Delta}_N$ by
\eq{ 
\Delta^2_{s_1,\dots,s_p,N} \coloneqq \Delta^2_{s_1,\dots,s_p}+\chi\delta^2_{s_1,\dots,s_p,N}, \qquad
\wt\Delta^2_{s_1,\dots,s_p,N} \coloneqq \wt\Delta^2_{s_1,\dots,s_p}+\chi\delta^2_{s_1,\dots,s_p,N}.
}
To avoid any concerns about regularity, in the case $\chi=1$ we will assume $N$ is sufficiently large that both $(\vc\Delta_N,\vc\lambda_N)$ and $(\wt{\vc\Delta}_N,\vc\lambda_N)$ satisfy \eqref{delta_decay_finite}; this is possible by Lemma~\ref{pos_principle}\ref{pos_principle_c}.
Consider the following interpolating Hamiltonian: 
\begin{subequations} \label{H_facts}
\eeq{ 
\HH_{t}(\sigma)
\coloneqq \sqrt{1-t}\big[H_{N,\vc\Delta}(\sigma)+\chi H_{N,\vc\delta_N}(\sigma)\big] + \sqrt{t}\big[H_{N,\wt{\vc\Delta}}(\sigma)+\chi\wt H_{N,\vc\delta_N}(\sigma)\big], \quad t\in[0,1].
}
We assume the Gaussian disorders in the four Hamiltonians $H_{N,\vc\Delta}$, $H_{N,\vc\delta_N}$, $H_{N,\wt{\vc\Delta}}$, $\wt H_{N,\vc\delta_N}$ are independent.
Therefore,
\eeq{ 
H_{N,\vc\Delta}+\chi H_{N,\vc\delta_N} \stackrel{\mathrm{law}}{=} H_{N,\vc\Delta_N}, \qquad
H_{N,\wt{\vc\Delta}}+\chi \wt H_{N,\vc\delta_N} \stackrel{\mathrm{law}}{=} H_{N,\wt{\vc\Delta}_N}, \qquad \text{and} \\
\text{$H_{N,\vc\Delta}+\chi H_{N,\vc\delta_N}$ and $H_{N,\wt{\vc\Delta}}+\chi \wt H_{N,\vc\delta_N}$ are conditionally independent given $\vc\delta_N$.}
}
\end{subequations}
We will write $\E_{g,1}$ for expectation over the disorders in $H_{N,\vc\Delta}$ and $H_{N,\wt{\vc\Delta}}$, $\E_{g,2}$ for expectation over the disorders in $H_{N,\vc\delta_N}$ and $\wt H_{N,\vc\delta_N}$, and $\E_g$ for expectation over all disorders.
We will write $\E_{\vc\delta}$ for expectation over the randomness in $\vc\delta_N$.

Denote the free energy corresponding to $\HH_t$ by
\eeq{ \label{7vjxn_new}
\phi(t)\coloneqq\frac{1}{N} \mathbb{E}\log \ZZ_{t},\quad \text{where} \quad 
\ZZ_{t}\coloneqq \int_{\Sigma_N}\exp \HH_{t}(\sigma)\ \mu_N(\dd\sigma).
}
When $t=0$ or $t=1$ we recover the separate models:
\eeq{ \label{vxo4l0_new}
\phi(0) =  F_{N}(\vc\Delta_N), \qquad \phi(1) = F_{N}(\wt{\vc\Delta}_N).
}
At any intermediate value $t\in(0,1)$, we will compute the derivative of $\phi$. Let $\langle \cdot \rangle_t$ denote expectation with respect to $G_{N,\HH_t}^{\otimes2}$.
Differentiation of \eqref{7vjxn_new} gives
\begin{subequations} \label{jwl0v3_new}
\eeq{
\phi^\prime (t)=\frac{1}{N}\mathbb{E}\Big\langle \frac{\partial}{\partial t} \HH_{t}(\sigma^1) \Big\rangle_{t}
=\frac{1}{N}\E_{\vc\delta}\E_g\Big\langle \frac{\partial}{\partial t} \HH_{t}(\sigma^1) \Big\rangle_{t}.
}
Now apply Gaussian integration by parts \cite[Lemma~1.1]{panchenko13a} on the right-hand side:
\eeq{
\E_g\Big\langle\frac{\partial}{\partial t} \HH_{t}(\sigma^1) \Big\rangle_{t}
= \E_g\big\langle \CC(\sigma^1,\sigma^1)-\CC(\sigma^1,\sigma^2)\big\rangle_t,
}
\end{subequations}
where $\CC\colon\Sigma_N\times\Sigma_N\to\R$ is defined as
\eeq{ \label{7vkc3a_new}
\CC(\sigma,\tau) \coloneqq \E_g\Big(\frac{\partial}{\partial t}[\HH_{t}(\sigma)]\cdot\HH_{t}(\tau)\Big).
}
It is straightforward to calculate the right-hand side of \eqref{7vkc3a_new} using \eqref{H_facts}. This calculation results in
\eeq{ \label{8g2bfd3}
\CC(\sigma,\tau)
&\stackrefp{cov_a}{=}\frac{-\E_g[H_{N,\vc\Delta_N}(\sigma)H_{N,\vc\Delta_N}(\tau)]}{2} + \frac{\E_g[H_{N,\wt{\vc\Delta}_N}(\sigma)H_{N,\wt{\vc\Delta}_N}(\tau)]}{2} \\
&\stackref{cov_a}{=}\frac{N}{2}\big[\xi_{N,\wt{\vc\Delta}_N}\big(\vc R(\sigma,\tau)\big)-\xi_{N,\vc\Delta_N}\big(\vc R(\sigma,\tau)\big)\big],
}
Notice that when $\sigma = \tau$, we have $\vc R(\sigma,\sigma) = \vc 1$.
So using \eqref{8g2bfd3} within \eqref{jwl0v3_new} yields
\eeq{ \label{8t2gqd}
\phi'(t) &= \frac{1}{2}\Big[\xi_{N,\wt{\vc\Delta}_N}(\vc1)-\xi_{N,\vc\Delta_N}(\vc 1) + \E_{\vc\delta}\E_g\big\langle \xi_{N,\vc\Delta_N}\big(\vc R(\sigma^1,\sigma^2)\big)-\xi_{N,\wt{\vc\Delta}_N}(\vc R(\sigma^1,\sigma^2)\big)\big\rangle_t\Big].
}
If $\chi=0$ (so $\vc\Delta_N = \vc\Delta$ and $\wt{\vc\Delta}_N = \wt{\vc\Delta}$), then the following inequality is trivial from \eqref{8t2gqd}:
\eq{
\phi'(t) \le \frac{1}{2}\Big[\xi_{N,\wt{\vc\Delta}}(\vc1)-\xi_{N,\vc\Delta}(\vc 1)+\sup_{\vc x\in[-1,1]^\SSS}\big(\xi_{N,\vc\Delta}(\vc x)-\xi_{N,\wt{\vc\Delta}}(\vc x)\big)\Big].
}
In light of \eqref{vxo4l0_new}, this proves \eqref{rh8ef1z}.
If instead $\chi=1$, then we fix $\eps>0$ and observe the following consequence of \eqref{8t2gqd}:
\eq{
\phi'(t)
&\le \frac{1}{2}\Big[\xi_{N,\wt{\vc\Delta}_N}(\vc1)-\xi_{N,\vc\Delta_N}(\vc 1) \\
&\phantom{\le\frac12\Big[}+\E_{\vc\delta}\E_g\Big\langle\one_{\{\vc R(\sigma^1,\sigma^2)\in(-\eps,1]^\SSS\}}\Big(\xi_{N,\vc\Delta_N}\big(\vc R(\sigma^1,\sigma^2)\big)-\xi_{N,\wt{\vc\Delta}_N}(\vc R(\sigma^1,\sigma^2)\big)\Big)\Big\rangle_t \\
&\phantom{\le\frac12\Big[}+ \E_{\vc\delta}\E_g\Big\langle\one_{\{\vc R(\sigma^1,\sigma^2)\notin(-\eps,1]^\SSS\}}\Big( \xi_{N,\vc\Delta_N}\big(\vc R(\sigma^1,\sigma^2)\big)-\xi_{N,\wt{\vc\Delta}_N}(\vc R(\sigma^1,\sigma^2)\big)\Big)\Big\rangle_t\Big]\\
&\le \frac{1}{2}\Big[\xi_{N,\wt{\vc\Delta}_N}(\vc1)-\xi_{N,\vc\Delta_N}(\vc 1) \\
&\phantom{\le\frac12\Big[}+\E_{\vc\delta}\E_g\Big\langle\one_{\{\vc R(\sigma^1,\sigma^2)\in(-\eps,1]^\SSS\}}\Big\rangle_t\sup_{\vc x\in(-\eps,1]^\SSS}\big(\xi_{N,\vc\Delta_N}(\vc x)-\xi_{N,\wt{\vc\Delta}_N}(\vc x)\big) \\
&\phantom{\le\frac12\Big[}+ \E_{\vc\delta}\E_g\Big\langle\one_{\{\vc R(\sigma^1,\sigma^2)\notin(-\eps,1]^\SSS\}}\Big\rangle_t\sup_{\vc x\in[-1,1]^\SSS}\big(\xi_{N,\vc\Delta_N}(\vc x)-\xi_{N,\wt{\vc\Delta}_N}(\vc x)\big)\Big].
}
Since $\xi_{N,\vc\Delta_N}(\vc0) = \xi_{N,\wt{\vc\Delta}_N}(\vc0)=0$, both suprema seen above are nonnegative.
Therefore, we may use the trivial inequality $\one_{\{\vc R(\sigma^1,\sigma^2)\in(-\eps,1]^\SSS\}}\le 1$ to obtain a further upper bound:
\eeq{ \label{jdv93x}
\phi'(t)
&\le \frac{1}{2}\Big[\xi_{N,\wt{\vc\Delta}_N}(\vc1)-\xi_{N,\vc\Delta_N}(\vc 1) + \sup_{\vc x\in(-\eps,1]^\SSS}\big(\xi_{N,\vc\Delta_N}(\vc x)-\xi_{N,\wt{\vc\Delta}_N}(\vc x)\big) \\
&\phantom{\le} + \E_{\vc\delta}\E_g G_{N,\HH_t}^{\otimes2}\Big(\bigcup_{s\in\SSS}\{R_s(\sigma^1,\sigma^2)\leq -\eps\}\Big)\sup_{\vc x\in[-1,1]^\SSS}\big(\xi_{N,\vc\Delta_N}(\vc x)-\xi_{N,\wt{\vc\Delta}_N}(\vc x)\big)\Big].
}
Note that $\sqrt{1-t}H_{N,\vc\delta_N}+\sqrt{t}\wt H_{N,\vc\delta_N} \stackrel{\mathrm{law}}{=} H_{N,\vc\delta_N}$.
This justifies the inequality below:
\eq{
\E_{\vc\delta}\E_g G_{N,\HH_t}^{\otimes2}\Big(\bigcup_{s\in\SSS}\{R_s(\sigma^1,\sigma^2)\leq -\eps\}\Big)
&= \E_{g,1}\E_{\vc\delta}\E_{g,2}G_{N,\HH_t}^{\otimes2}\Big(\bigcup_{s\in\SSS}\{R_s(\sigma^1,\sigma^2)\leq -\eps\}\Big) \\
&\le \sup_H \E_{\vc\delta}\E_{g,2}G_{N,H+H_{N,\vc\delta_N}}^{\otimes2}\Big(\bigcup_{s\in\SSS}\{R_s(\sigma^1,\sigma^2)\leq -\eps\}\Big).
}
So by \eqref{pos_prin_eq}, we may assume $N$ is large enough that
\eq{
\E_{\vc\delta} \E_g G_{N,\HH_t}^{\otimes2}\Big(\bigcup_{s\in\SSS}\{R_s(\sigma^1,\sigma^2)\leq -\eps\}\Big) \le \eps \quad \text{for all $t\in(0,1)$}.
}
Inserting this inequality into \eqref{jdv93x} results in
\eq{
\sup_{t\in(0,1)} \phi'(t)
&\le \frac{1}{2}\Big[\xi_{N,\wt{\vc\Delta}_N}(\vc1)-\xi_{N,\vc\Delta_N}(\vc 1) + \sup_{\vc x\in(-\eps,1]^\SSS}\big(\xi_{N,\vc\Delta_N}(\vc x)-\xi_{N,\wt{\vc\Delta}_N}(\vc x)\big) \\
&\phantom{\le\frac12\Big[} + \eps\sup_{\vc x\in[-1,1]^\SSS}\big(\xi_{N,\vc\Delta_N}(\vc x)-\xi_{N,\wt{\vc\Delta}_N}(\vc x)\big)\Big].
} 
In light of \eqref{vxo4l0_new} and Lemma~\ref{pos_principle}\ref{pos_principle_d}, we infer from this derivative bound the following inequality:
\eq{
&\limsup_{N\to\infty}(F_{N}(\wt{\vc\Delta}_N)-F_{N}(\vc\Delta_N)) \\
&\le\limsup_{N\to\infty}\frac{1}{2}\Big[\xi_{N,\wt{\vc\Delta}}(\vc1)-\xi_{N,\vc\Delta}(\vc 1)+\sup_{\vc x\in(-\eps,1]^\SSS}\big(\xi_{N,\vc\Delta}(\vc x)-\xi_{N,\wt{\vc\Delta}}(\vc x)\big) \\
&\phantom{\le\limsup_{N\to\infty}\frac12\Big[}+ \eps \sup_{\vc x\in[-1,1]^\SSS}\big(\xi_{N,\vc\Delta}(\vc x)-\xi_{N,\wt{\vc\Delta}}(\vc x)\big)\Big].
}
Furthermore, we can use \eqref{rh9ef1z} together with Lemma~\hyperref[pos_principle_d]{\ref*{pos_principle}\ref*{pos_principle_d}} to see that
\eq{
\lim_{N\to\infty}|F_{N}(\vc\Delta)-F_{N}(\vc\Delta_N)| = 0 \qquad \text{and} \qquad
\lim_{N\to\infty}|F_{N}(\wt{\vc\Delta})-F_{N}(\wt{\vc\Delta}_N)| = 0.
}
Now \eqref{rh6ef1z} follows by using the two previous displays together with \eqref{uniform_cov_convergence}, and then sending $\eps\searrow0$.
\end{proof}
To go from positive temperature to zero temperature, we need the following standard lemma. 
In the statement below, $\alpha$ is a scalar acting as an inverse-temperature parameter.

\begin{lemma}[Ground state energy from free energy] \label{lem_free_to_GSE}
Assume \eqref{converging_ratios} and \eqref{delta_decay}.
For either $\sig\in\{\is,\sph\}$, we have
\eeq{ \label{free_to_GSE}
\lim_{\alpha\to\infty}\liminf_{N\to\infty}\frac{1}{\alpha }F_{N,\sig}(\alpha \vc \Delta, \vc \lambda_N)
&=\liminf_{N\to\infty}\GSE_{N,\sig}(\vc \Delta,\vc \lambda_N), \\
\text{and} \quad
\lim_{\alpha\to\infty}\limsup_{N\to\infty}\frac{1}{\alpha }F_{N,\sig}(\alpha \vc \Delta, \vc \lambda_N)
&=\limsup_{N\to\infty}\GSE_{N,\sig}(\vc \Delta,\vc \lambda_N).
}
\end{lemma}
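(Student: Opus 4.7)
The plan is to establish the two-sided sandwich
\eq{
\GSE_{N,\sig}(\vc\Delta,\vc\lambda_N)-\eta-\tfrac{C(\eta)}{\alpha}
\le\tfrac{1}{\alpha}F_{N,\sig}(\alpha\vc\Delta,\vc\lambda_N)
\le\GSE_{N,\sig}(\vc\Delta,\vc\lambda_N)
}
for every $\eta>0$ and all sufficiently large $N$ and $\alpha$, with a rate $C(\eta)$ that is independent of $N$. Taking $\liminf_N$ or $\limsup_N$, then $\alpha\to\infty$, then $\eta\searrow 0$, would yield both halves of the lemma at once. The existence of the limits in $\alpha$ follows from pointwise-in-$N$ monotonicity of $\alpha\mapsto F_{N,\sig}(\alpha\vc\Delta,\vc\lambda_N)/\alpha$, itself a consequence of convexity together with $F_{N,\sig}(\vc 0,\vc\lambda_N)=0$.

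The upper bound is immediate because $\mu_{N,\sig}$ is a probability measure, which gives $Z_{N,\sig}(\alpha\vc\Delta,\vc\lambda_N)\le\exp(\alpha\max_\sigma H_N(\sigma))$. For the lower bound in the Ising case, each singleton carries $\mu_{N,\is}$-mass $2^{-N}$, so keeping only the summand at a maximizer yields $Z_{N,\is}(\alpha\vc\Delta,\vc\lambda_N)\ge 2^{-N}\exp(\alpha\max_\sigma H_N)$, giving the sandwich with $\eta=0$ and $C=\log 2$. For the spherical case, I would replace the singleton by the neighborhood
\eq{
B_r(\hat\sigma)\coloneqq\{\sigma\in\Sigma_{N,\sph}:\,R_s(\sigma,\hat\sigma)\ge 1-r\text{ for every }s\in\SSS\}
}
of a (random) maximizer $\hat\sigma$ of $H_N$, and combine two standard ingredients: a deterministic spherical-cap volume bound $\mu_{N,\sph}(B_r(\hat\sigma))\ge e^{-NK(r)}$ (valid for every $\hat\sigma$ by rotational invariance of each uniform factor), and a uniform modulus-of-continuity estimate $\sup_{\sigma\in B_r(\hat\sigma)}|H_N(\sigma)-H_N(\hat\sigma)|\le N\eta$ with probability at least $1-e^{-cN}$ once $r=r(\eta)$ is taken small. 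These would give $\log Z_{N,\sph}\ge\alpha H_N(\hat\sigma)-\alpha N\eta-NK(r)$ on the good event; the bad event contributes negligibly to $\E\log Z_{N,\sph}$ after Cauchy--Schwarz, since the Borell--TIS tail \eqref{borell_tis} controls $\E(\max_\sigma|H_N|)^2=O(N^2)$ while the bad event has probability $e^{-cN}$. Dividing by $\alpha N$ and taking expectation then produces the sandwich with $C(\eta)=K(r(\eta))+O(1)$.

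The main obstacle is the modulus-of-continuity estimate in the spherical case: since $\hat\sigma$ is strongly correlated with the disorder, one cannot apply Borell--TIS at a fixed point. The cleanest workaround is to dominate the quantity of interest by the fully deterministic supremum $M_r\coloneqq\sup\{|H_N(\sigma)-H_N(\tau)|:\,R_s(\sigma,\tau)\ge 1-r\text{ for every }s\in\SSS\}$, which is an upper bound for $\sup_{\sigma\in B_r(\hat\sigma)}|H_N(\sigma)-H_N(\hat\sigma)|$ regardless of $\hat\sigma$. Concentration of $M_r$ around its mean follows from Borell--TIS, and its mean is controlled by a Dudley-type entropy integral; the gradient computation $\Var(\partial_i H_N(\sigma))=O(1)$ on the sphere keeps the intrinsic metric scale of order $\sqrt{Nr}$, which is comfortably $o(N)$ for small fixed $r$. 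Alternatively, this spherical estimate is essentially routine in the spin glass literature and can be quoted from \cite{bates_sohn22a_arxiv}.
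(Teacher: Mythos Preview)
Your proposal is correct and follows essentially the same strategy as the paper: both directions use the trivial upper bound $\tfrac{1}{\alpha}F_N(\alpha\vc\Delta)\le\GSE_N(\vc\Delta)$, while the lower bound combines a volume estimate with Dudley's entropy bound on the uniform modulus of continuity (your $M_r$ is the paper's $M_{N,\eps}$). The paper streamlines by working with an $\eps\sqrt{N}$-net and taking expectations directly, which handles the Ising and spherical cases uniformly and avoids your good/bad event split with Cauchy--Schwarz, but the substantive ingredients are the same.
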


\begin{proof}
We argue similarly to \cite[Lemma~8]{chen_sen17}.
For ease of notation, we drop the symbols $\vc\lambda_N$ and $\sig$.
For each $\eps\in (0,1)$, let $K_{N,\eps} \subseteq \Sigma_N$ be an $\eps\sqrt{N}$-net of $\Sigma_N$ (with respect to the $\ell^2$ norm) whose cardinality is 
\eeq{ \label{q6vm3}
\# K_{N,\eps}\leq (3/\eps)^N. 
}
(In the Ising case, one can take $K_{N,\eps}$ to be the entire configuration space $\{-1,1\}^N$.
In the spherical case, use \cite[Corollary~4.2.13]{vershynin18} to identify an $\eps$-net of the unit sphere $\mathbb{S}^{(\#\II_{s,N})-1}$ with cardinality at most $(3/\eps)^{\#\II_{s,N}}$, then scale these points by $\sqrt{\#\II_{s,N}}$ to produce an $\eps\sqrt{\#\II_{s,N}}$-net of $S_{\# \II_{s,N}}$, and finally let $K_{N,\eps}$ be the product of these nets over $s\in\SSS$.)
We trivially have
\eeq{ \label{ek9bc}
\max_{\sigma\in \Sigma_N}H_N(\sigma)
&\leq \max_{\sigma\in K_{N,\eps}}H_N(\sigma)+M_{N,\eps},
}
where
\eeq{ \label{MNeps_def}
M_{N,\eps} &\coloneqq \sup_{\sigma,\tau\in \Sigma_N,\, \|\sigma-\tau\|_2\leq \eps \sqrt{N}} |H_N(\sigma)-H_N(\tau)|.
}
Next we convert to the canonical metric
    \[
   d(\sigma,\tau)\coloneqq\sqrt{\E\big[\big(H_N(\sigma)-H_N(\tau)\big)^2\big]}
   \stackref{cov_a}{=}\sqrt{2N\big(\xi_N(\vc 1)-\xi_N(\vc R(\sigma,\tau))\big)},
    \]
which satisfies
\eq{
d(\sigma,\tau) 
&\le \sqrt{2N}\Big(\sup_{\vc x\in[-1,1]^\SSS}\|\nabla\xi_N(\vc x)\|_2^{1/2}\Big)\|\vc1-\vc R(\sigma,\tau)\|_2^{1/2} \\
&=\sqrt{2N}\Big(\sup_{\vc x\in[-1,1]^\SSS}\|\nabla\xi_N(\vc x)\|_2^{1/2}\Big)\bigg(\sum_{s\in\SSS}\Big(\frac{1}{2\cdot\#\II_{s,N}}\sum_{i\in\II_{s,N}}(\sigma_i-\tau_i)^2\Big)^2\bigg)^{1/4} \\
&\le\sqrt{2N}\Big(\sup_{\vc x\in[-1,1]^\SSS}\|\nabla\xi_N(\vc x)\|_2^{1/2}\Big)\bigg(\sum_{s\in\SSS}\frac{1}{2\cdot\#\II_{s,N}}\sum_{i\in\II_{s,N}}(\sigma_i-\tau_i)^2\bigg)^{1/2} \\
&= \Big(\sup_{\vc x\in[-1,1]^\SSS}\|\nabla\xi_N(\vc x)\|_2^{1/2}\Big)\bigg(\sum_{s\in\SSS}\frac{1}{\lambda_{s,N}}\sum_{i\in\II_{s,N}}(\sigma_i-\tau_i)^2\bigg)^{1/2} \\
&\le \Big(\sup_{\vc x\in[-1,1]^\SSS}\|\nabla\xi_N(\vc x)\|_2^{1/2}\Big)\Big(\inf_{s\in\SSS}\lambda_{s,N}\Big)^{-1}\|\sigma-\tau\|_2.
}
Under assumptions \eqref{converging_ratios} and \eqref{delta_decay}, the value of 
\eq{
\eta\coloneqq\lim_{N\to\infty}\Big(\sup_{\vc x\in[-1,1]^\SSS}\|\nabla\xi_N(\vc x)\|_2^{1/2}\Big)\Big(\inf_{s\in\SSS}\lambda_{s,N}\Big)^{-1}
= \Big(\sup_{\vc x\in[-1,1]^\SSS}\|\nabla\xi(\vc x)\|_2^{1/2}\Big)\Big(\inf_{s\in\SSS}\lambda_{s}\Big)^{-1}
}
is finite.
Without loss of generality, we assume $N$ is sufficiently large that $d(\sigma,\tau) \le 2\eta\|\sigma-\tau\|_2$ for all $\sigma,\tau\in\Sigma_N$, so that $K_{N,\eps}$ is a $2\eta\eps\sqrt{N}$-net of $\Sigma_N$ with respect to $d$, and
\eq{
M_{N,\eps} \le  \sup_{\sigma,\tau\in \Sigma_N,\, d(\sigma,\tau)\leq 2\eta\eps \sqrt{N}} |H_N(\sigma)-H_N(\tau)|.
}
It now follows from Dudley's bound \cite[Theorem~1.4.2]{talagrand21} that there exists a constant $L$ such that for every $N$ and $\eps$,
\eeq{ \label{ck4gf}
\E M_{N,\eps} 
&\stackrefp{q6vm3}{\le} L\cdot 2\eta \sqrt{N}\int_0^\eps\sqrt{\log\# K_{N,\eps'}}\ \dd\eps' \\
&\stackref{q6vm3}{\le} L\cdot 2\eta N\int_0^\eps\sqrt{\log(3/\eps')}\ \dd\eps' \\
&\stackrefp{q6vm3}{=} L\cdot 6\eta N\int_{3/\eps}^\infty \frac{\sqrt{\log u}}{u^2}\ \dd u \\
&\stackrefp{q6vm3}{\le} L\cdot 6\eta N\int_{3/\eps}^\infty \frac{1}{u^{3/2}}\ \dd u
= L\cdot 12\eta N\sqrt{\eps/3}.
}
We have thus controlled the second term on the right-hand side of \eqref{ek9bc}.

To control the other term, let $B_{\eps\sqrt{N}}(\sigma)$ denote the $\ell^2$ ball centered at $\sigma$ with radius $\eps\sqrt{N}$, and let $b_{N,\eps}\coloneqq\mu_N(B_{\eps\sqrt{N}}(\sigma)\cap\Sigma_N)$
(which is independent of $\sigma\in\Sigma_N$ by symmetry). 
Since $\Sigma_N = \bigcup_{\sigma \in K_{N,\eps}}(B_{\eps \sqrt{N}}(\sigma)\cap\Sigma_N)$ and $\mu_N(\Sigma_N) = 1$, we have $1\leq (\#K_{N,\eps})b_{N,\eps}$ and thus 
\eeq{ \label{q6vm4}
\frac{1}{b_{N,\eps}} \le \# K_{N,\eps}
\stackref{q6vm3}{\le} (3/\eps)^N.
}
We now have, for any $\alpha>0$,
\eq{
&\max_{\sigma\in K_{N,\eps}}\frac{H_N(\sigma)}{N}
= \frac{1}{N\alpha}\log\bigg(\max_{\sigma\in K_{N,\eps}}\frac{1}{b_{N,\eps}}\int_{B_{\eps\sqrt{N}}(\sigma)\cap\Sigma_N}\e^{\alpha H_N(\sigma)}\mu_N(\dd \tau)\bigg) \\
&\stackrefp{FNZN_def,q6vm4}{\le} \frac{1}{N\alpha}\log\bigg(\sum_{\sigma\in K_{N,\eps}}\frac{1}{b_{N,\eps}}\int_{B_{\eps\sqrt{N}}(\sigma)\cap\Sigma_N}\e^{\alpha H_N(\sigma)}\mu_N(\dd \tau)\bigg)\\
&\stackrefpp{MNeps_def}{FNZN_def,q6vm4}{\le} \frac{1}{N\alpha}\log\bigg(\sum_{\sigma\in K_{N,\eps}}\int_{B_{\eps\sqrt{N}}(\sigma)\cap\Sigma_N}\e^{\alpha H_N(\tau)}\mu_N(\dd \tau)\bigg)-\frac{1}{N\alpha} \log b_{N,\eps}+\frac{1}{N}M_{N,\eps}\\
&\stackrefp{FNZN_def,q6vm4}{\le} \frac{1}{N\alpha}\log\bigg(\int_{\Sigma_N}\e^{\alpha H_N(\tau)}\mu_N(\dd \tau)\bigg)+\frac{1}{N\alpha}\log(\# K_{N,\eps})-\frac{1}{N\alpha} \log b_{N,\eps}+\frac{1}{N}M_{N,\eps} \\
&\stackref{FNZN_def,q6vm4}{\le} \frac{1}{N\alpha}\log Z_{N}(\alpha\vc\Delta) + \frac{2}{\alpha}\log(3/\eps) + \frac{1}{N}M_{N,\eps}.
}
Now take expectation on both sides, use \eqref{ek9bc} and \eqref{ck4gf}, and send $N\to\infty$ to obtain
\eq{
\liminf_{N\to\infty}\GSE_N(\vc\Delta) 
&\le \liminf_{N\to\infty}\frac{F_N(\alpha\vc\Delta)}{\alpha} + \frac{2}{\alpha}\log(3/\eps) + L\cdot 12\eta\sqrt{\eps/3}, \\
\text{and}\quad
\limsup_{N\to\infty}\GSE_N(\vc\Delta) 
&\le \limsup_{N\to\infty}\frac{F_N(\alpha\vc\Delta)}{\alpha} + \frac{2}{\alpha}\log(3/\eps) + L\cdot 12\eta\sqrt{\eps/3}.
}
Next send $\alpha\to\infty$, followed by $\eps\to 0$:
\eeq{ \label{wjb98}
\liminf_{N\to\infty}\GSE_N(\vc\Delta) 
&\le \liminf_{\alpha\to0}\liminf_{N\to\infty}\frac{F_N(\alpha\vc\Delta)}{\alpha}, \\
\text{and} \quad
\limsup_{N\to\infty}\GSE_N(\vc\Delta) 
&\le \liminf_{\alpha\to0}\limsup_{N\to\infty}\frac{F_N(\alpha\vc\Delta)}{\alpha}.
}
On the other hand, since $\mu_N(\Sigma_N) = 1$, we trivially have (by comparing \eqref{FNZN_def} and \eqref{GSE_def})
\eq{
\frac{F_N(\alpha\vc\Delta)}{\alpha} \le\GSE_N(\vc\Delta) \quad \text{for all $\alpha>0$}. 
}
It follows that
\eeq{ \label{wjb99}
\limsup_{\alpha\to0}\liminf_{N\to\infty}\frac{F_N(\alpha\vc\Delta)}{\alpha}
&\le \liminf_{N\to\infty}\GSE_N(\vc\Delta) , \\
\text{and} \quad
\limsup_{\alpha\to0}\limsup_{N\to\infty}\frac{F_N(\alpha\vc\Delta)}{\alpha}
&\le \limsup_{N\to\infty}\GSE_N(\vc\Delta).
}
Combining \eqref{wjb98} and \eqref{wjb99} yields \eqref{free_to_GSE}.
\end{proof}

\begin{proof}[Proof of Theorem~\ref{thm_main}]
Using $\beta_p = \beta_p(\vc\Delta,\vc\lambda)$ from \eqref{beta_def}, we define interaction parameters $\Barabove{\vc\Delta}$ by
\eeq{ \label{rwe8gbx}
\Barabove{\Delta}^2_{s_1,\dots,s_p} \coloneqq \begin{cases} \beta_p^2/\lambda_s^{p-1} &\text{if $s_1=\cdots=s_p=s$} \\
 0 &\text{otherwise.}
\end{cases}
}
The associated Hamiltonian from \eqref{MSKHamiltonian} is
\eq{
H_{N,\barabove{\vc\Delta}}(\sigma) 
&= \sum_{p=1}^\infty\frac{1}{N^{(p-1)/2}}\sum_{s\in\SSS}\frac{\beta_p}{\lambda_{s}^{(p-1)/2}}\sum_{i_1,\dots,i_p\in\II_{s,N}}g_{i_1,\dots,i_p}\sigma_{i_1}\cdots\sigma_{i_p} \\
&= \sum_{s\in\SSS}\sum_{p=1}^\infty\beta_p\frac{\lambda_{s,N}^{(p-1)/2}}{\lambda_s^{(p-1)/2}}\cdot\frac{1}{(\#\II_{s,N})^{(p-1)/2}}\sum_{i_1,\dots,i_p\in\II_{s,N}}g_{i_1,\dots,i_p}\sigma_{i_1}\cdots\sigma_{i_p}.
}
Since there is no interaction between species, and $\Sigma_{N,\sig}$ is a product space, the free energy $F_{N,\sig}(\Barabove{\vc\Delta},\vc\lambda_N)$ from \eqref{FNZN_def} decouples into a sum of single-species free energies:
\eeq{ \label{7ygwnb}
F_{N,\sig}(\Barabove{\vc\Delta},\vc\lambda_N) = \frac{1}{N}\sum_{s\in\SSS}(\# \II_{s,N})F_{\# \II_{s,N},\sig}^{\single}\Bigg(\bigg(\beta_p\frac{\lambda_{s,N}^{(p-1)/2}}{\lambda_s^{(p-1)/2}}\bigg)_{p\ge1}\Bigg).
}
Meanwhile, the covariance functions $\xi_{N,\barabove{\vc\Delta}}$ and $\xi_{\infty,\barabove{\vc\Delta}}$ from \eqref{cov_b} and \eqref{cov_c} are given by
\eeq{ \label{wtxi_def}
\xi_{N,\barabove{\vc\Delta}}(\vc x) 
= \sum_{p=1}^\infty \sum_{s\in\SSS}\frac{\beta_p^2}{\lambda_s^{p-1}}\lambda_{s,N}^px_s^p, \qquad
\xi_{\infty,\barabove{\vc\Delta}}(\vc x)
= \sum_{p=1}^\infty \sum_{s\in\SSS}\beta_p^2\lambda_{s}x_s^p.
}
Note that $(\barabove{\vc\Delta},\vc\lambda)$ satisfies \eqref{delta_decay} since $(\vc\Delta,\vc\lambda)$ satisfies \eqref{delta_decay}:
\eeq{ \label{beta_decay}
\xi_{\infty,\barabove{\vc\Delta}}\big((1+\eps)\vc1)\big)
= \sum_{p=1}^\infty(1+\eps)^p\beta_p^2
&\stackref{beta_def}{=}\sum_{p=1}^\infty(1+\eps)^{p}\sum_{s_1,\dots,s_p\in\SSS}
\Delta^2_{s_1,\dots,s_p}\lambda_{s_1}\cdots\lambda_{s_p} \\
&\stackrefpp{cov_c}{beta_def}{=}\xi_{\infty,\vc\Delta}\big((1+\eps)\vc1\big)
\stackref{delta_decay}{<}\infty.
}
This will allow us to use Proposition~\ref{prop_key} with $\wt{\vc\Delta}=\Barabove{\vc\Delta}$.
The following result, which motivated Definition~\ref{def_balanced}, will make the outcome of that proposition useful in the balanced case.
A similar inequality appeared in \cite[Proposition~7.2]{issa24_arxiv}.

\begin{proposition}[Key inequality] \label{key_claim}
With $\Barabove{\vc\Delta}$ as in \eqref{rwe8gbx}, we have 
$\xi_{\infty,\vc\Delta}(\vc1)=\xi_{\infty,\barabove{\vc\Delta}}(\vc1)$ and $\xi_{\infty,\vc\Delta}(\vc x)\le\xi_{\infty,\barabove{\vc\Delta}}(\vc x)$ for every $\vc x\in[0,1]^\SSS$.
\end{proposition}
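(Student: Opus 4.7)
The plan is to prove both statements by a term-by-term comparison in the degree of $\vc x$. Write
\[
T_p(\vc x) \coloneqq \sum_{s_1,\dots,s_p\in\SSS}\Delta^2_{s_1,\dots,s_p}\lambda_{s_1}\cdots\lambda_{s_p}x_{s_1}\cdots x_{s_p},
\]
so that $\xi_{\infty,\vc\Delta}(\vc x) = \sum_{p\ge 1} T_p(\vc x)$, while \eqref{wtxi_def} gives the clean expression $\xi_{\infty,\barabove{\vc\Delta}}(\vc x) = \sum_{p\ge 1}\beta_p^2 \sum_{s\in\SSS}\lambda_s x_s^p$. I will show the (stronger) degree-by-degree inequality
\[
T_p(\vc x) \;\le\; \beta_p^2 \sum_{s\in\SSS}\lambda_s x_s^p \qquad \text{for all } \vc x\in[0,1]^\SSS,
\]
with equality when $\vc x = \vc 1$; summing over $p$ then yields both claims at once.

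The equality at $\vc x = \vc 1$ is immediate for each $p$: the right-hand side equals $\beta_p^2$ because $\sum_s \lambda_s = 1$, and the left-hand side equals $\beta_p^2$ directly by the definition \eqref{beta_def}. For general $\vc x\in[0,1]^\SSS$, the key ingredient is AM--GM applied coordinatewise: since $x_s\ge 0$, we have $x_{s_1}\cdots x_{s_p}\le \tfrac{1}{p}\sum_{j=1}^p x_{s_j}^p$. Substituting this into $T_p(\vc x)$ and using the symmetry of $\Delta^2_{s_1,\dots,s_p}$ in its indices, all $p$ summands indexed by $j$ contribute the same quantity, so
\[
T_p(\vc x) \;\le\; \sum_{t\in\SSS}\lambda_t x_t^p\Bigg(\sum_{s_2,\dots,s_p\in\SSS}\Delta^2_{t,s_2,\dots,s_p}\lambda_{s_2}\cdots\lambda_{s_p}\Bigg).
\]
At this point the balanced hypothesis \eqref{balanced} is invoked: the parenthesized inner sum is independent of $t\in\SSS$, and averaging over $t$ against $\lambda_t$ identifies the common value as exactly $\beta_p^2$ (this uses $\sum_t\lambda_t = 1$ and the definition \eqref{beta_def}). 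Plugging this back gives the desired degree-$p$ bound.

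The $p=1$ case is degenerate but works: the ``AM--GM'' step is vacuous ($x_{s_1}=x_{s_1}^1$), and the reduction instead uses the first clause of \eqref{balanced}, which directly says $\Delta_t^2$ is constant in $t$, hence equal to $\beta_1^2$. Accordingly, $T_1(\vc x) = \beta_1^2\sum_t \lambda_t x_t$ is already an equality. I do not anticipate any real obstacle: the proposition is essentially a one-line consequence of AM--GM combined with the symmetrization that \eqref{balanced} enables, and the equality case at $\vc x=\vc 1$ is automatic because AM--GM is tight when all arguments coincide.
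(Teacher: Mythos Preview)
Your proposal is correct and follows essentially the same approach as the paper's proof: both establish the degree-$p$ inequality via AM--GM on $x_{s_1}\cdots x_{s_p}$, use the symmetry of $\Delta^2_{s_1,\dots,s_p}$ to collapse the $p$ resulting sums into one, and then invoke the balanced hypothesis \eqref{balanced} to identify the inner sum as $\beta_p^2$ (with $p=1$ handled separately). The only cosmetic difference is that the paper names the constant $C_p$ before showing $C_p=\beta_p^2$, whereas you compress this into a single sentence.
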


\begin{proofclaim}
The equality $\xi_{\infty,\vc\Delta}(\vc 1)=\xi_{\infty,\barabove{\vc\Delta}}(\vc 1)$ is by construction: simply take $\eps=0$ in \eqref{beta_decay}.
Next we prove the inequality $\xi_{\infty,\vc\Delta}(\vc x)\le\xi_{\infty,\barabove{\vc\Delta}}(\vc x)$.
First consider $p\ge2$.
The AM--GM inequality gives
\eq{
x_{s_1}\cdots x_{s_p}
\leq \frac{1}{p}\sum_{i=1}^p x_{s_i}^p \quad \text{for all $x_{s_1},\dots,x_{s_p}\ge0$}.
}
Multiply both sides by $\Delta^2_{s_1,\dots,s_p}\lambda_{s_1}\cdots\lambda_{s_p}$, and then sum over $s_1,\dots,s_p$:
\eeq{ \label{94dgxn3}
\sum_{s_1,\dots,s_p\in\SSS}\Delta^2_{s_1,\dots,s_p} \lambda_{s_1}\cdots\lambda_{s_p}x_{s_1}\cdots x_{s_p}
&\le \sum_{s_1,\dots,s_p\in\SSS}\Delta^2_{s_1,\dots,s_p}\lambda_{s_1}\cdots\lambda_{s_p}\frac{1}{p}\sum_{i=1}^p x_{s_i}^p \\
&=\frac{1}{p}\sum_{i=1}^p\sum_{s_i\in \SSS}\la_{s_i}x_{s_i}^p\sum_{(s_j)_{j\neq i}\in \SSS^{p-1}}\Delta^2_{s_1,\ldots, s_p} \prod_{j\neq i}\la_{s_j} \\
&=\sum_{s_1\in \SSS}\la_{s_1}x_{s_1}^p\sum_{s_2,\dots,s_p\in\SSS}\Delta^2_{s_1,s_2,\ldots, s_p}\lambda_{s_2}\cdots\lambda_{s_p},
}
where the final equality is due to symmetry of the map $(s_1,\dots,s_p)\mapsto\Delta^2_{s_1,\dots,s_p}$.
By the second line of the balanced assumption \eqref{balanced}, the value of the inner sum on the final line of \eqref{94dgxn3} is equal to some constant $C_p$ not depending on $s_1$.
On the other hand, we have
\eq{
C_p = \sum_{s_1\in\SSS}\lambda_{s_1}C_p = \sum_{s_1\in\SSS}\lambda_{s_1}\sum_{s_2,\dots,s_p\in\SSS}\Delta^2_{s_1,s_2,\dots,s_p}\lambda_{s_2}\cdots \lambda_{s_p}
\stackref{beta_def}{=} \beta_p^2.
}
Now \eqref{94dgxn3} reads as
\eeq{ \label{94dgxn4}
\sum_{s_1,\dots,s_p\in\SSS}\Delta^2_{s_1,\dots,s_p} \lambda_{s_1}\cdots\lambda_{s_p}x_{s_1}\cdots x_{s_p}
\leq \beta_p^2\sum_{s\in\SSS}\lambda_{s}x_s^p.
}
This inequality also holds for $p=1$, since the first line of \eqref{balanced} guarantees $\Delta_s^2 = \beta_1^2$ for every $s\in\SSS$.
So we may sum \eqref{94dgxn4} over all $p\ge1$: the left-hand side yields $\xi_{N,\vc\Delta}(\vc x)$ from \eqref{cov_b}, while the right-hand side yields $\xi_{\infty,\barabove{\vc\Delta}}(\vc x)$ from \eqref{wtxi_def}.
\end{proofclaim}

Now we invoke Proposition~\ref{prop_key} with $\wt{\vc\Delta}=\Barabove{\vc\Delta}$. 
As $N\to\infty$, we have
\eeq{ \label{4h9fxx}
&\liminf_{N\to\infty} F_{N,\sig}(\vc\Delta,\vc\lambda_N) \\
&\stackref{rh6ef1z}{\ge} \liminf_{N\to\infty} F_{N,\sig}(\Barabove{\vc\Delta},\vc\lambda_N) -\frac{1}{2}\bigg(\underbrace{\xi_{\infty,\barabove{\vc\Delta}}(\vc 1)-\xi_{\infty,\vc\Delta}(\vc 1) + \sup_{\vc x\in[0,1]^\SSS}\big(\xi_{\infty,\vc\Delta}(\vc x)-\xi_{\infty,\barabove{\vc\Delta}}(\vc x)\big)}_{\text{$\le0$ by Proposition~\ref{key_claim}}}\bigg) \\
&\stackref{7ygwnb}{\geq} \liminf_{N\to\infty}\sum_{s\in\SSS}\lambda_{s,N}F_{\#\II_{s,N},\sig}^{\single}\Bigg(\bigg(\beta_p\frac{\lambda_{s,N}^{(p-1)/2}}{\lambda_s^{(p-1)/2}}\bigg)_{p\ge1}\Bigg).
}
Next we invoke Proposition~\ref{prop_key} once more, but in the single-species case.
Namely, if we take $\SSS = \{s\}$, $\Delta_{s,\dots,s} = \beta_p\lambda_{s,N}^{(p-1)/2}\lambda_s^{-(p-1)/2}$, and $\wt\Delta_{s,\dots,s} = \beta_p$, then \eqref{rh9ef1z} implies
\eq{
\bigg|F_{\#\II_{s,N},\sig}^{\single}\Bigg(\bigg(\beta_p\frac{\lambda_{s,N}^{(p-1)/2}}{\lambda_s^{(p-1)/2}}\bigg)_{p\ge1}\Bigg)-F_{\#\II_{s,N},\sig}^{\single}(\bbeta)\bigg|
\le \sum_{p=1}^\infty\beta_p^2\Big|\Big(\frac{\lambda_{s,N}}{\lambda_s}\Big)^{p-1}-1\Big|.
}
By \eqref{converging_ratios} and \eqref{beta_decay}, we can use dominated convergence to conclude that the right-hand side tends to $0$ as $N\to\infty$.
Therefore, the two previous displays together imply 
\eq{
\liminf_{N\to\infty} F_{N,\sig}(\vc\Delta,\vc\lambda_N) \geq \sum_{s\in\SSS}\lambda_s \liminf_{N\to\infty} F_{N,\sig}^{\single}\big(\bbeta).
}
Since $\sum_{s\in\SSS}\lambda_s = 1$ and $F_{N,\sig}^\single(\bbeta)$ converges as $N\to\infty$ (see Remark~\ref{rmk_conv}), we have proved the desired inequality \eqref{eq:thm:msk}.

To extend the result to zero temperature, we invoke Lemma~\ref{lem_free_to_GSE} twice and use the positive-temperature result in between:
\eq{
\liminf_{N\to\infty} \GSE_{N,\sig}(\vc\Delta,\vc\lambda_N)
&\stackref{free_to_GSE}{=}\lim_{\alpha\to\infty}\liminf_{N\to\infty} \frac{1}{\alpha}F_{N,\sig}(\alpha\vc\Delta,\vc\lambda_N) \\
&\stackref{eq:thm:msk}{\ge}\lim_{\alpha\to\infty}\lim_{N\to\infty} \frac{1}{\alpha}F_{N,\sig}^\single(\alpha\bbeta) \\
&\stackref{free_to_GSE}{=}\lim_{N\to\infty} \GSE_{N,\sig}^\single(\bbeta).
}
We have thus proved \eqref{GSE_lower}.
\end{proof}

\section{Proofs of applications: matching upper bounds} \label{sec_proof_applications}

\subsection{Convex case} \label{subsec_upper_bound}

In this section, we prove Corollary~\ref{cor_convex}.
We begin by defining the Parisi functional for both the Ising case $\Sigma_{N,\is}=\{-1,+1\}^N$ and the spherical case $\Sigma_{N,\sph}=\otimes_{s\in \SSS}S_{\#\II_{s,N}}$. 

Given a covariance function $\xi\colon\R^\SSS\to\R$ as in \eqref{cov_c}, 
define the related functions
\eeq{ \label{deriv_def}
\xi^{s}(\bx)\coloneqq\frac{1}{\la_s}\frac{\partial \xi}{\partial x_{s}}(\bx),\qquad \theta(\bx)\coloneqq\Big(\sum_{s\in \SSS}x_s\frac{\partial \xi}{\partial x_{s}}(\bx)\Big) -\xi(\bx).
}
Both the Ising Parisi functional $\PPP_{\is}$ and the spherical Parisi functional $\PPP_{\sph}$ take as inputs any sequence $m=(m_{j})_{j\in\{0,\dots,k\}}$ and any array $\bq = (q_{j,s})_{j\in\{0,\dots,k+1\},\, s\in \SSS}$ of the following form:
\begin{equation}\label{eq:fop:multi}
\begin{split}
     &0 = m_{0} < m_1 < \dots < m_{k} = 1,\\
     &0 = q_{0,s} \leq q_{1,s} \leq \dots \leq q_{k+1,s} = 1 \quad \text{for each $s \in \mathscr{S}$}.
\end{split}
\end{equation}
Here $k$ can be any positive integer.
Given $\bq$, we write $\bq_j = (q_{j,s})_{s\in \SSS}$. 

In the Ising case, let $\eta_0,\dots,\eta_{k}$ be independent standard normal random variables, and then define the following random quantity for each species $s\in\SSS$:
\begin{equation*}
    X_{k+1,s}\coloneqq\log\cosh \Big(\sum_{j=0}^{k}\eta_{j}\sqrt{\xi^s(\bq_{j+1})-\xi^s(\bq_{j})}\,\Big).
\end{equation*} 
Using $\E_{j}$ to denote expectation with respect to $\eta_{j}$, we recursively define 
\eeq{\label{iteratingX}
X_{j,s} &\coloneqq \frac{1} {m_{j}} \log \E_{j} \exp(m_{j} X_{j+1,s})  \quad \text{for $j\in\{1,\dots,k\}$,} \\
X_{0,s} &\coloneqq \E_0 X_{1,s}.
} 
Finally, the Ising Parisi functional is given by
\eeq{\label{Parisifunctional}
\mathscr{P}_{\is}(m,\bq) \coloneqq\sum_{s \in \mathscr{S}} \lambda_s X_{0,s} - \frac{1} {2}\sum_{j=1}^{k} m_{j}\big(\theta(\bq_{j+1})-\theta(\bq_{j})\big).
}

\begin{remark}
Our Parisi functional \eqref{Parisifunctional} does not contain an additional summand of $\log 2$ that appears in \cite{panchenko15}.
This is because our partition function $Z_{N,\is}$ in \eqref{FNZN_def} includes a factor of $2^{-N}$ from the presence of the uniform probability measure $\mu_{N,\is}$ on $\{-1,+1\}^N$.
\QED
\end{remark}

In the spherical case,
for each species we define an auxiliary sequence $(d_{j,s})_{j\in\{1,\dots,k+1\}}$ by
\eq{
d_{j,s} &\coloneqq\sum_{j' = j}^{k}m_{j'}\big(\xi^s(\bq_{j'+1})-\xi^s(\bq_{j'})\big) \quad \text{for $j\in\{1,\dots,k\}$}, \qquad \text{and} \qquad
d_{k+1,s} \coloneqq 0.
}
Given any $b > d_{1,s}$, we define the number
\eeq{ \label{A_def}
X_s(b) \coloneqq b - 1 - \log b + \frac{\xi^s(\bq_1)}{b-d_{1,s}} + \sum_{j=1}^{k}\frac{1}{m_j}\log\frac{b-d_{j+1,s}}{b-d_{j,s}}.
}
Finally, the spherical Parisi functional is given by
\eeq{ \label{sph_par_func}
\PPP_{\sph}(m,\bq)
\coloneqq\sum_{s\in\SSS}\lambda_s\inf_{b_s>d_{1,s}}X_s(b_s)- \frac{1} {2}\sum_{j=1}^{k} m_{j}\big(\theta(\bq_{j+1})-\theta(\bq_{j})\big).
}

\begin{proposition}[{Broken replica symmetry bound, \cite[Theorem~1]{panchenko15}, \cite[Proposition~3.1]{bates_sohn22a}}]\label{prop:guerra} 
Assume \eqref{converging_ratios}, \eqref{delta_decay}, and that $\xi$ is convex on $[0,1]^\SSS$. 
Then for either $\sig\in \{\is,\sph\}$, we have
     \begin{equation}\label{eq:prop:guerra}
         \limsup_{N\to\infty} F_{N,\sig}(\vc\Delta,\vc\lambda_N)\leq \inf_{m,\bq}\PPP_{\sig}(m,\bq),
     \end{equation}
where the infimum is taken over all finite sequences $m$ and $\vc q$ of the form \eqref{eq:fop:multi}.
\end{proposition}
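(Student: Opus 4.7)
The plan is to adapt Guerra's interpolation scheme to the multi-species setting, exploiting convexity of $\xi$ only on the positive orthant $[0,1]^\SSS$ (not on the full cube $[-1,1]^\SSS$) by invoking the positivity principle of Lemma~\ref{pos_principle}. Fix $(m,\bq)$ satisfying \eqref{eq:fop:multi}.

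First I would construct the cavity side. Introduce a Ruelle probability cascade $(v_\alpha)_{\alpha\in\N^k}$ with parameters $m_1,\dots,m_k$, and for each species $s\in\SSS$, depth $j\in\{0,\dots,k-1\}$, coordinate $i\in\II_{s,N}$, and tree-vertex $\alpha|_j$, sample independent centered Gaussians $z_{s,\alpha|_j,i}$ of variance $\xi^s(\bq_{j+1})-\xi^s(\bq_j)$. Build the cavity field
\[
Y_N(\sigma,\alpha) = \sum_{s\in\SSS}\sum_{i\in\II_{s,N}}\sigma_i\sum_{j=0}^{k-1} z_{s,\alpha|_j,i},
\]
and define the interpolating Hamiltonian $\HH_t(\sigma,\alpha) \coloneqq \sqrt{t}\,H_N(\sigma) + \sqrt{1-t}\,Y_N(\sigma,\alpha) + H_{N,\vc\delta_N}(\sigma)$, where $H_{N,\vc\delta_N}$ is the Ghirlanda--Guerra perturbation from Lemma~\ref{pos_principle}. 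Let $\phi(t)\coloneqq\frac{1}{N}\E\log \sum_\alpha v_\alpha\int\exp\HH_t(\sigma,\alpha)\,\mu_N(\dd\sigma)$. The boundary values should satisfy $\phi(1)=F_{N,\sig}(\vc\Delta,\vc\lambda_N)+o(1)$, using Lemma~\ref{pos_principle}\ref{pos_principle_d}, and $\phi(0)=\PPP_\sig(m,\bq)+o(1)$, since at $t=0$ the problem decouples across species and the RPC integration identities produce the $\log\cosh$ cascade \eqref{iteratingX} in the Ising case, and (after Laplace-method analysis of the spherical Gaussian integral, which introduces the Lagrange multiplier) the expression \eqref{A_def} in the spherical case; the deterministic $\theta$-bookkeeping term in \eqref{Parisifunctional}/\eqref{sph_par_func} appears as a constant drift added to $\HH_t$ (depending linearly on $t$).

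Next, I would compute $\phi'(t)$ by Gaussian integration by parts. The joint covariance of the two random parts of $\HH_t$ and the RPC overlap formula combine to yield, after a standard calculation,
\[
\phi'(t) = -\tfrac12\,\E\bigl\langle\, \xi(\vc R(\sigma^1,\sigma^2)) - \xi(\bq_{j^\wedge}) - \nabla\xi(\bq_{j^\wedge})\cdot(\vc R(\sigma^1,\sigma^2) - \bq_{j^\wedge})\,\bigr\rangle_t,
\]
where $j^\wedge=j^\wedge(\alpha^1,\alpha^2)$ is the depth of the most recent common ancestor of $\alpha^1,\alpha^2$ in the cascade. The bracketed quantity is exactly the Bregman residual of $\xi$ between $\vc R(\sigma^1,\sigma^2)$ and $\bq_{j^\wedge}\in[0,1]^\SSS$, so convexity of $\xi$ on $[0,1]^\SSS$ forces it to be nonnegative on the event $\{\vc R(\sigma^1,\sigma^2)\in[0,1]^\SSS\}$; the positivity principle \eqref{pos_prin_eq} handles the complementary event by showing its Gibbs probability is $o(1)$ uniformly in $t$ (the supremum in \eqref{pos_prin_eq} is over arbitrary external Hamiltonians, which covers the fixed realization of $Y_N$). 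Integrating $\phi'(t)\le o(1)$ over $[0,1]$ and comparing boundary values yields \eqref{eq:prop:guerra}.

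The main obstacle is the boundary identification at $t=0$ in the spherical case. Unlike the Ising case---where the RPC integral factorizes coordinate-wise into independent one-dimensional $\log\cosh$ integrals---the spherical constraint couples all coordinates within each species, so one must asymptotically evaluate constrained Gaussian integrals of the form $\int_{S_n}\exp(\text{quadratic}+\text{linear})\,\dd\mu$. Laplace's method at the sphere introduces a saddle-point equation whose minimizer is precisely the Lagrange multiplier $b_s$ appearing in \eqref{A_def}, and one must verify that the infimum over $b_s>d_{1,s}$ is attained and commutes with the remaining RPC cascade. This saddle-point analysis, together with the verification that the perturbation can be removed at the end, is the technical content of \cite{bates_sohn22a}; the Ising counterpart is \cite{panchenko15}.
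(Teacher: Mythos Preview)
The paper does not prove this proposition at all: it is stated as a quotation from the literature (\cite[Theorem~1]{panchenko15} for the Ising case, \cite[Proposition~3.1]{bates_sohn22a} for the spherical case), and the subsequent Remark only explains how the precise statements in those references match the formulation given here. Your outline is a faithful sketch of the Guerra--RPC interpolation argument carried out in those references, including the use of the positivity principle to restrict attention to overlaps in $[0,1]^\SSS$; there is nothing to compare it against in the present paper.

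One small inaccuracy in your sketch: the $\theta$-term in \eqref{Parisifunctional}/\eqref{sph_par_func} does not arise as a ``deterministic drift depending linearly on $t$''. In the standard construction (e.g.\ \cite{panchenko15,bates_sohn22a}) one introduces an additional centered Gaussian process $(y_\alpha)$ indexed by the cascade alone, with increments of variance $\theta(\bq_{j+1})-\theta(\bq_j)$, and includes $\sqrt{t}\,\sqrt{N}\,y_\alpha$ in the interpolating Hamiltonian; this is what produces the second sum in \eqref{Parisifunctional} at $t=0$ and simultaneously cancels the extra terms in the derivative computation so that $\phi'(t)$ reduces exactly to the Bregman residual you wrote. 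Without this piece (or an equivalent device), the derivative formula you state does not come out cleanly.
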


\begin{remark}[Clarifications for Proposition~\ref{prop:guerra}]
In the spherical case, \cite[Proposition~3.1]{bates_sohn22a} is slightly more general than what is presented above.
Namely, \cite{bates_sohn22a} defines $\PPP_\sph$ on $\vc\lambda$-admissible pairs $(\zeta,\Phi)$, meaning $\zeta$ is a Borel probability measure on $[0,1]$, and $\Phi$ (the ``synchronization map'') is a continuous function $[0,1]\to[0,1]^\SSS$ that is nondecreasing in each coordinate, and satisfies $\sum_{s\in\SSS}\lambda_s\Phi_s(x) = x$ for all $x\in[0,1]$.
What is seen in \eqref{eq:prop:guerra} is the special case of the purely atomic measure
\eq{
\zeta = \sum_{j=1}^k m_j\delta_{q_j}, \quad \text{where} \quad q_j = \sum_{s\in\SSS}\lambda_s q_{j,s},
}
paired with any $\Phi$ such that $\Phi(q_j) = \vc q_j$ for each $j\in\{0,\dots,k+1\}$, e.g.\ $\Phi(x)$ is defined by linear interpolation whenever $q_j < x < q_{j+1}$.
In this special case, \eqref{A_def} can be found in \cite[display~(2.41)]{bates_sohn22a}.
The inclusion of general measures does not change the infimum in \eqref{eq:prop:guerra}, since a general measure can be approximated by discrete ones, and the Parisi functional is continuous \cite[Theorem~1.5]{bates_sohn22a}.

In the Ising case, \cite[Theorem~1]{panchenko15} is slightly less general than what is presented above.
Namely, \cite{panchenko15} only considers the SK model: $\Delta^2_{s_1,\dots,s_p} = 0$ for all $p\ne 2$.
Nevertheless, the upper bound free the mixed $p$-spin model can be handled in the same way (i.e.\ using Guerra interpolation) under the stated convexity assumption.  
The only nontrivial modification is to justify why convexity on $[0,1]^\SSS$ is sufficient (rather than all of $[-1,1]^\SSS$).
This is done using a multi-species version of Talagrand's positivity principle (appearing here as Lemma~\ref{pos_principle}), just as in \cite[Section~3.3]{bates_sohn22a}.
In fact, the only part of \cite[Section~3.3]{bates_sohn22a} that is specific to the spherical case is the formula \cite[display~(2.47)]{bates_sohn22a}, which is replaced in the Ising case by \cite[display~(21)]{panchenko15} suitably adapted for mixed $p$-spins.

A final technical detail is that \cite{panchenko15} only considers cases where $k\geq2$, $q_{1,s} = 0$, and $q_{k,s}=1$.
In other words, it is assumed that the measure $\zeta$ places positive mass on both $0$ and $1$.
The computations are not complicated very much by removing this assumption (e.g.\ see \cite[display~(14.84)]{talagrand11b}).
In any case this distinction does not change the value of the infimum in \eqref{eq:prop:guerra}, again by approximation together with continuity of the Parisi functional.
\QED
\end{remark}

The key observation to prove Corollary~\ref{cor_convex} from Proposition~\ref{prop:guerra} is that when we insist $q_{j,s}=q_{j,s'}$ for all $j\in\{0,\dots,k+1\}$ and $s,s'\in \SSS$, the right-hand side of \eqref{eq:prop:guerra} reduces to the single-species Parisi formula.
To state this precisely, we define the single-species Parisi functional as follows. 
Suppose that $\xi$ is balanced (Definition~\ref{def_balanced}), and consider the corresponding single-species covariance function $\xi^{\single}\colon[-1,1]\to\R$ given by
\eeq{ \label{xi_sin_def}
\xi^\single(x)
\coloneqq\sum_{p=1}^\infty\beta_p^2 x^p
\stackref{beta_def}{=} \sum_{p=1}^\infty\Big(\sum_{s_1,\dots,s_p\in\SSS}
\Delta^2_{s_1,\dots,s_p}\lambda_{s_1}\cdots\lambda_{s_p}\Big)x^p
\stackref{cov_c}{=}\xi(x\cdot\vc1).
}
For $\sig\in \{\is,\sph\}$, let $\PPP_{\sig}^{\single}$ denote the special case of $\PPP_{\sig}$ when $\#\SSS=1$ and the covariance function is given by $\xi^{\single}(x)$.
By the references in Remark~\ref{rmk_spar} (specifically \cite[Theorem~1]{panchenko14} in the Ising case, and \cite[Theorem~1.1]{chen13} in the spherical case), we have
\begin{equation}\label{eq:single:parisi}
    \lim_{N\to\infty} F_{N,\sx}^\single(\bbeta)
    = \inf_{m,q}\PPP_{\sx}^\single(m,q).
\end{equation}
The key calculation is the following:
\begin{lemma}[Recovering the single-species functional]\label{lem:parisi:ftl:reduce}
Assume \eqref{delta_decay} and \eqref{balanced}.
Given a sequence $q = (0 = q_0 \le q_1 \le \cdots \le q_k \le q_{k+1} = 1)$, let $\wh{\vc q} = (\wh{\vc q}_j)_{j\in\{0,\dots,k+1\}}$ be given by $\wh{\vc q}_j = q_j\cdot\vc 1$ for each $j$.
For either $\sx\in \{\is,\sph\}$ and any sequence $m = (0 = m_0 < m_1 < \cdots < m_k = 1)$, we have
\eeq{ \label{dk4n1}
    \PPP_{\sx}(m,\wh{\vc q})=\PPP^\single_{\sx}(m,q).
}
\end{lemma}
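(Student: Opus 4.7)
The plan is to show that, under the balanced hypothesis, substituting the constant vector $\wh{\vc q}_j = q_j\cdot\vc 1$ collapses every species-indexed quantity in $\PPP_\sig$ onto its single-species counterpart, after which the weights $\lambda_s$ sum to $1$ and produce $\PPP_\sig^\single(m,q)$. The one nontrivial ingredient is the identity
\begin{equation*}
\xi^s(q\cdot\vc1) = (\xi^\single)'(q) \qquad \text{for all } s\in\SSS,\ q\in[0,1].
\end{equation*}
Indeed, the formula in Example~\ref{ex_model}\ref{ex_model_3} gives $\xi^s(q\cdot\vc1) = \Delta_s^2 + \sum_{p\geq 2} p q^{p-1}\sum_{s_2,\ldots,s_p}\Delta_{s,s_2,\ldots,s_p}^2\lambda_{s_2}\cdots\lambda_{s_p}$, and the balanced assumption \eqref{balanced} tells us the inner sum is independent of $s$. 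Combining this with the computation $C_p=\beta_p^2$ already carried out in the proof of Proposition~\ref{key_claim} (and $\Delta_s^2=\beta_1^2$ from the first line of \eqref{balanced}) yields $\xi^s(q\cdot\vc1) = \sum_{p\geq 1}p\beta_p^2 q^{p-1} = (\xi^\single)'(q)$, independent of $s$.

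Armed with this, the $\theta$-term is immediate from \eqref{deriv_def}:
\begin{equation*}
\theta(q\cdot\vc1) = q\sum_{s\in\SSS}\lambda_s\,\xi^s(q\cdot\vc1) - \xi(q\cdot\vc1) = q(\xi^\single)'(q) - \xi^\single(q) = \theta^\single(q),
\end{equation*}
using $\sum_s\lambda_s=1$ and \eqref{xi_sin_def}. Hence the sum $\sum_j m_j(\theta(\wh{\vc q}_{j+1})-\theta(\wh{\vc q}_j))$ in $\PPP_\sig(m,\wh{\vc q})$ already equals the corresponding sum in $\PPP_\sig^\single(m,q)$ for both choices of $\sig$.

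It remains to handle the species-indexed first term in each case. In the Ising case, each increment $\xi^s(\wh{\vc q}_{j+1})-\xi^s(\wh{\vc q}_j) = (\xi^\single)'(q_{j+1})-(\xi^\single)'(q_j)$ is independent of $s$, so the random variable $X_{k+1,s}$ equals the single-species $X_{k+1}^\single$; a routine induction on the recursion \eqref{iteratingX} gives $X_{j,s}=X_j^\single$ for all $j$ and $s$, and in particular $\sum_s\lambda_s X_{0,s}=X_0^\single$. In the spherical case, the same observation makes each $d_{j,s}$ equal to the single-species $d_j^\single$, and inspecting \eqref{A_def} shows $X_s(b)$ coincides with the single-species $X^\single(b)$; thus $\inf_{b_s>d_{1,s}}X_s(b_s)$ is $s$-independent and matches $\inf_{b>d_1^\single}X^\single(b)$, so again the $\lambda_s$-weighted sum collapses to the single-species value.

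There is no real obstacle here beyond bookkeeping: the only step that actually invokes \eqref{balanced} is the identity $\xi^s(q\cdot\vc1)=(\xi^\single)'(q)$, and everything else is a direct substitution. The proof of \eqref{dk4n1} therefore amounts to assembling these three pieces (the $\theta$-reduction, the Ising recursion reduction, and the spherical variational reduction) and invoking $\sum_{s\in\SSS}\lambda_s=1$ in each case.
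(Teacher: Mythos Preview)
Your proposal is correct and follows essentially the same approach as the paper: both hinge on the identity $\xi^s(q\cdot\vc 1)=(\xi^\single)'(q)$ (derived from \eqref{balanced}), then reduce the $\theta$-term and the species-indexed terms in turn. The only cosmetic difference is that you cite the computation $C_p=\beta_p^2$ from Proposition~\ref{key_claim} rather than rederiving it inline, and you spell out the Ising induction a bit more explicitly.
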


\begin{proof}
   We first claim that $\xi^{s}(x\cdot \bone)=\big(\xi^{\single}\big)^\prime(x)$ for all $x\in[-1,1]$ and $s\in \SSS$. 
   Indeed, by a direct calculation,
\eeq{ \label{same_deriv}
 \xi^{s}(x\cdot \bone)
 &\stackrefpp{deriv_def}{balanced}{=}
 \Delta_s^2 + \sum_{p=2}^{\infty}\sum_{s_2,\ldots, s_p\in \SSS}
 p\Delta^2_{s,s_2,\ldots, s_p}\la_{s_2}\cdots \la_{s_p}\cdot x^{p-1} \\
 &\stackref{balanced}{=}\sum_{p=1}^\infty \sum_{s_1,s_2,\dots,s_p\in\SSS}p\Delta^2_{s_1,s_2,\ldots, s_p}\la_{s_1}\la_{s_2}\cdots \la_{s_p}\cdot x^{p-1}
 \stackref{xi_sin_def}{=}(\xi^\single)^\prime(x).
}
We now have
\eeq{ \label{theta_same}
 \theta(x\cdot\bone)
 \stackref{deriv_def}{=}\Big(x\sum_{s\in \SSS} \la_s \xi^{s}(x\cdot\bone)\Big) -\xi 
(x\cdot \bone)
\stackref{same_deriv,xi_sin_def}{=}x(\xi^\single)^\prime(x)-\xi^\single(x).
}
Moreover, since $\xi^s(\wh{\vc q}_j) = \xi^{s}(q_j\cdot\bone)=\big(\xi^\single\big)^\prime(q_j)$, we have the following: 
\begin{itemize}
    \item In the Ising case, the value of $X_{0,s}$ in \eqref{iteratingX} is the same across species $s\in \SSS$.
    Together with \eqref{theta_same}, this implies that the functional  \eqref{Parisifunctional} satisfies $\PPP_{\is}(m, \wh{\vc q})=\PPP_{\is}^{\single}(m, q)$.
    \item In the spherical case, the value of $X_s(b)$ in \eqref{A_def} is the same across species $s\in\SSS$.
    Together with \eqref{theta_same}, this implies that the functional  \eqref{sph_par_func} satisfies $\PPP_{\sph}(m, \wh{\vc q})=\PPP_{\sph}^{\single}(m, q)$.  \qedhere  
\end{itemize}
\end{proof}
\begin{proof}[Proof of Corollary~\ref{cor_convex}]
Using the notation $q$ and $\wh{\vc q}$ from Lemma~\ref{lem:parisi:ftl:reduce}, we have
\eeq{ \label{d7blp}
    \limsup_{N\to\infty} F_{N,\sx}(\vc\Delta,\vc\lambda_N)
    \stackref{eq:prop:guerra}{\leq} \inf_{m,q}\PPP_{\sx}(m,\wh{\vc q})
    \stackref{dk4n1}{=}\inf_{m,q}\PPP_{\sx}^\single(m,q)
    \stackref{eq:single:parisi}{=}
    \lim_{N\to\infty}F_{N,\sig}^\single(\bbeta).
}
Combining this upper bound with the lower bound \eqref{eq:thm:msk} yields \eqref{eq_msk2_a}.

To extend the result to zero temperature, we invoke Lemma~\ref{lem_free_to_GSE} twice and use the positive-temperature result in between:
\eq{
\limsup_{N\to\infty} \GSE_{N,\sig}(\vc\Delta,\vc\lambda_N)
&\stackref{free_to_GSE}{=}\lim_{\alpha\to\infty}\limsup_{N\to\infty} \frac{1}{\alpha}F_{N,\sig}(\alpha\vc\Delta,\vc\lambda_N) \\
&\stackref{d7blp}{\le}\lim_{\alpha\to\infty}\limsup_{N\to\infty} \frac{1}{\alpha}F_{N,\sig}^\single(\alpha\bbeta) \\
&\stackref{free_to_GSE}{=}\lim_{N\to\infty} \GSE_{N,\sig}^\single(\bbeta).
}
Combining this upper bound with the lower bound \eqref{GSE_lower} yields \eqref{eq_msk2_b}.
\end{proof}

\subsection{Ground state energy of balanced pure bipartite spherical model} \label{subsec_proof_bipartite}

\begin{proof}[Proof of Corollary~\ref{cor_bpbs}]
We assume $\vc\lambda$ and $\vc\Delta$ are given by \eqref{wfe78}.
It follows that the balanced condition \eqref{balanced} holds; see Example~\ref{ex_model}\ref{ex_model_3}.
Therefore, we may apply Theorem~\ref{thm_main}, which gives
\eeq{ \label{k39bs}
\liminf_{N\to\infty}\GSE_{N,\sph}(\vc\Delta,\vc\lambda_N)
\ge \lim_{N\to\infty}\GSE_{N,\sph}^\single(\bbeta),
}
where $\bbeta$ is given by \eqref{beta_def}.
Because this $\bbeta$ is pure $(p+q)$-spin (i.e.\ $\beta_r = 0$ for all $r\neq p+q$) with $\beta_{p+q}^2 = \xi(1,1) \stackref{wfe78_a}{=} \beta^2$, the limit statement \eqref{3r8gx} guarantees
\eeq{ \label{jd2bc}
\lim_{N\to\infty}\GSE_{N,\sph}^\single(\bbeta)
= \sqrt{\beta^2}E_0(p+q).
}
On the other hand, under the additional assumption \eqref{volume_assumption}, it was shown in \cite[Corollary~2.9]{mckenna24} (note the normalization $\xi(1,1) = 1$ from \cite[page~640]{mckenna24}, whereas we have $\xi(1,1)=\beta^2$) that for any $\eps>0$, there exist constants $C,c>0$ such that
\eq{
\P\Big(\max_{\sigma\in\Sigma_{N,\sph}}\frac{H_N(\sigma)}{N} \ge \sqrt{\beta^2}E_0(p+q) + \eps\Big) \le C\exp(-cN).
}
It follows from Borel--Cantelli (and sending $\eps\searrow0$) that
\eq{
\limsup_{n\to\infty}\max_{\sigma\in\Sigma_{N,\sph}}\frac{H_N(\sigma)}{N} \le \sqrt{\beta^2}E_0(p+q)
\quad \mathrm{a.s.}
}
By concentration \eqref{borell_tis} and another application of Borel--Cantelli, we also have
\eq{
\lim_{N\to\infty}\Big|\max_{\sigma\in\Sigma_{N,\sph}}\frac{H_N(\sigma)}{N} - \GSE_{N,\sph}(\vc\Delta,\vc\lambda_N)\Big| = 0 \quad \mathrm{a.s.}
}
The two previous displays together show
\eeq{ \label{23kbct}
\limsup_{n\to\infty}\GSE_{N,\sph}(\vc\Delta,\vc\lambda_N) \le \sqrt{\beta^2}E_0(p+q).
}
The proof is completed by combining \eqref{k39bs}, \eqref{jd2bc}, and \eqref{23kbct}.
\end{proof}

\subsection{Injective norm of Gaussian random tensors} \label{subsec_proof_inj_norm}
In this section we prove Corollary~\ref{cor_inj_norm}.
Recall that the tensor $T=(T_{i_1,\ldots, i_p})_{1\leq i_1,\ldots i_p\leq d}$ consists of independent standard normal random variables.
The following lemma relates the injective norm of $T$ to the ground state energy of a certain multi-species spin glass.

\begin{lemma}[Injective norm is a ground state energy] \label{lem_translate}
Given $p,d\in\Z_{\ge1}$, let $N = p d$ and consider the multi-species model \eqref{MSKHamiltonian} with $\SSS = \{1,\dots,p\}$, $\# \II_{s,N} = d$ for each $s\in\SSS$, and 
\eeq{ \label{73hb}
\Delta^2_{s_1,\dots,s_q} = 0 \quad \text{whenever $q\ne p$}, \qquad
\Delta^2_{s_1,\dots,s_p} = \begin{cases}
\frac{p^{p+1}}{p!} &\text{if $s_1,\dots,s_p$ are distinct} \\
0 &\text{else.}
\end{cases}
}
Then the covariance function is
\eeq{ \label{37gcb}
\xi_N(x_1,\dots,x_p) = px_1\cdots x_p, 
}
and we have the following distributional equality:
\eeq{ \label{38gcb}
\frac{\|T\|_\inj}{\sqrt{d}} \stackrel{\mathrm{law}}{=} \max_{\sigma\in\Sigma_{N,\sph}}\frac{H_{N}(\sigma)}{N}.
}
\end{lemma}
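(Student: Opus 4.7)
The plan is to compute each side directly and match them via a suitable change of variables and a Gaussian identification.

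First I would verify \eqref{37gcb} by plugging the specific values into \eqref{cov_b}. Since $\Delta^2_{s_1,\dots,s_q}=0$ for $q\neq p$, only the degree-$p$ term survives. Among these, only the $p!$ tuples $(s_1,\dots,s_p)$ that are permutations of $(1,\dots,p)$ contribute, each with weight $p^{p+1}/p!$. Combining this with $\lambda_{s_1}\cdots\lambda_{s_p}=p^{-p}$ (since $\lambda_s=1/p$ for all $s$) gives $\xi_N(\vc x)=p!\cdot(p^{p+1}/p!)\cdot p^{-p}\cdot x_1\cdots x_p = p\,x_1\cdots x_p$, confirming \eqref{37gcb}.

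Next I would prove \eqref{38gcb}. Identify $\II_{s,N}$ with $\{s\}\times\{1,\dots,d\}$ so that an element $\sigma\in\Sigma_{N,\sph}$ is $(\sigma^{(1)},\dots,\sigma^{(p)})$ with each $\sigma^{(s)}\in S_d$, and set $u^{(s)}\coloneqq\sigma^{(s)}/\sqrt d\in\S^{d-1}$. Using \eqref{73hb} in \eqref{MSKHamiltonian}, only the degree-$p$ terms with $(s_1,\dots,s_p)$ a permutation of $(1,\dots,p)$ survive, giving
\eq{
H_N(\sigma)=\frac{1}{N^{(p-1)/2}}\sqrt{\frac{p^{p+1}}{p!}}\sum_{\pi\in S_p}\sum_{k_1,\dots,k_p=1}^{d} g_{(\pi(1),k_1),\dots,(\pi(p),k_p)}\,\sigma^{(\pi(1))}_{k_1}\cdots\sigma^{(\pi(p))}_{k_p}.
}
I would then reindex each inner sum by setting $l_{\pi(j)}=k_j$, rewriting it as
\eq{
\sum_{l_1,\dots,l_p=1}^{d}g_{(\pi(1),l_{\pi(1)}),\dots,(\pi(p),l_{\pi(p)})}\,\sigma^{(1)}_{l_1}\cdots\sigma^{(p)}_{l_p}.
}
The key observation is that as $\pi$ and $(l_1,\dots,l_p)$ vary, the index tuples $\bigl((\pi(1),l_{\pi(1)}),\dots,(\pi(p),l_{\pi(p)})\bigr)$ are all distinct, so the Gaussians involved are i.i.d.\ $\mathcal{N}(0,1)$. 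Hence for each fixed $(l_1,\dots,l_p)$, the sum over $\pi$ of $p!$ such independent Gaussians equals in law $\sqrt{p!}\,T_{l_1,\dots,l_p}$, and these are independent across $(l_1,\dots,l_p)$. Thus
\eq{
H_N(\sigma)\stackrel{\mathrm{law}}{=}\frac{\sqrt{p!}}{N^{(p-1)/2}}\sqrt{\frac{p^{p+1}}{p!}}\sum_{l_1,\dots,l_p=1}^{d}T_{l_1,\dots,l_p}\,\sigma^{(1)}_{l_1}\cdots\sigma^{(p)}_{l_p}.
}
Substituting $\sigma^{(s)}=\sqrt d\,u^{(s)}$ pulls out a factor of $d^{p/2}$, and using $N=pd$ the overall prefactor simplifies to $p^{(p+1)/2}d^{p/2}/(pd)^{(p-1)/2}=p\sqrt d$. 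Dividing by $N=pd$ yields $H_N(\sigma)/N\stackrel{\mathrm{law}}{=}d^{-1/2}\langle T,u^{(1)}\otimes\cdots\otimes u^{(p)}\rangle$, and taking the max over $\sigma\in\Sigma_{N,\sph}$ (equivalently over $u^{(1)},\dots,u^{(p)}\in\S^{d-1}$) gives \eqref{38gcb} by \eqref{def_inj_norm}.

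The main obstacle is the bookkeeping step in the middle: one has to carefully verify that the reindexing $(k_j)\mapsto(l_{\pi(j)})$ is consistent and that the resulting $p!\cdot d^p$ Gaussian entries appearing across $\pi$ and $(l_1,\dots,l_p)$ are genuinely distinct, so that their sum over $\pi$ produces an $\mathcal{N}(0,p!)$ variable independent across $(l_1,\dots,l_p)$. Once this is in place, the $\sqrt{p!}$ cancels the $\sqrt{1/p!}$ built into the normalization of $\Delta^2$, and the algebra of the prefactor works out cleanly.
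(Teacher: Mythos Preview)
Your proof is correct, but it takes a different route than the paper. The paper defines the auxiliary process $\tilde H_N(\sigma)/N \coloneqq d^{-1/2}\langle T,u^{(1)}\otimes\cdots\otimes u^{(p)}\rangle$ (with $u^{(s)}=\sigma^{(s)}/\sqrt d$), observes that its maximum is $\|T\|_\inj/\sqrt d$ by definition, and then establishes the process-level equality $\tilde H_N\stackrel{\mathrm{law}}{=}H_N$ by computing both covariances and checking that each equals $Np\prod_s R_s(\sigma,\tau)$. Your approach instead expands $H_N$ explicitly, reindexes so that the product of spins becomes $\sigma^{(1)}_{l_1}\cdots\sigma^{(p)}_{l_p}$, and then identifies the $p!\cdot d^p$ distinct Gaussians directly, collapsing the sum over $\pi$ into a single $\mathcal{N}(0,p!)$ coefficient for each $(l_1,\dots,l_p)$. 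The covariance route is slicker: it sidesteps the index bookkeeping entirely (no need to verify distinctness of the tuples $((\pi(1),l_{\pi(1)}),\dots,(\pi(p),l_{\pi(p)}))$ or to track the $\sqrt{p!}$ cancellation), and it makes the process-level nature of the distributional identity transparent, since for centered Gaussian processes equality of covariance is equivalent to equality in law. Your direct manipulation, by contrast, makes the connection between the two Hamiltonians more concrete at the level of the disorder variables themselves, and the verification that the $g$-indices are distinct across $(\pi,l)$ is a clean injectivity check. Both arguments are short; the paper's just has fewer moving parts.
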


\begin{proof}
The first assertion \eqref{37gcb} is immediate from the definition \eqref{cov_b} and the fact that $\lambda_{s,N} = 1/p$ for each $s\in\{1,\dots,p\}$.
Since $\# \II_{s,N} = d$ for each $s$, our configuration space is the following $p$-fold product: 
\eq{
\Sigma_{N,\sph} = \sqrt{d}\,\S^{d-1}\times \cdots\times \sqrt{d}\,\S^{d-1}.
}
So let us write a given configurations as $\sigma = (\sigma^{(1)},\dots,\sigma^{(p)})$, where $\sigma^{(s)}\in \sqrt{d}\,\S^{d-1}$ for each $s\in\{1,\dots,p\}$.
Using this notation, define $\tilde H_N\colon\Sigma_{N,\sph}\to\R$ by
\eq{
\frac{\tilde H_N(\sigma)}{N} = \frac{1}{\sqrt{d}}\sum_{i_1,\dots,i_p=1}^d T_{i_1,\dots,i_p}\frac{\sigma^{(1)}_{i_1}}{\sqrt{d}}\cdots\frac{\sigma^{(p)}_{i_p}}{\sqrt{d}}.
}
By definition of the injective norm \eqref{def_inj_norm}, we have
\eq{
 \max_{\sigma\in\Sigma_{N,\sph}}\frac{\tilde H_N(\sigma)}{N} = \frac{\|T\|_\inj}{\sqrt{d}}.
}
To obtain \eqref{38gcb}, it suffices to show that $(\tilde H_N(\sigma))_{\sigma\in\Sigma_{N,\sph}}$ has the same joint distribution as $( H_N(\sigma))_{\sigma\in\Sigma_{N,\sph}}$.
Since these are centered Gaussian processes with continuous sample paths, we just need that their covariances agree.
That is, we need to check
\eeq{ \label{ug336}
\E[\tilde H_N(\sigma)\tilde H_N(\tau)] = \E[H_N(\sigma)H_N(\tau)] \quad \text{for all $\sigma,\tau\in\Sigma_{N,\sph}$}.
}
Since the entries of $T$ are independent with mean $0$ and variance $1$, we have
\eq{
\text{LHS of \eqref{ug336}} 
&=\frac{N^2}{d}\sum_{i_1,\dots,i_p=1}^d \frac{\sigma^{(1)}_{i_1}}{\sqrt{d}}\cdots\frac{\sigma^{(p)}_{i_p}}{\sqrt{d}}
\cdot\frac{\tau^{(1)}_{i_1}}{\sqrt{d}}\cdots\frac{\tau^{(p)}_{i_p}}{\sqrt{d}} \\
&= N\cdot\frac{N}{d}\cdot\Big(\frac{1}{d}\sum_{i=1}^d\sigma^{(1)}_i\tau^{(1)}_i\Big)\cdots\Big(\frac{1}{d}\sum_{i=1}^d\sigma^{(p)}_i\tau^{(p)}_i\Big)
\stackref{overlap_def}{=} Np\prod_{s=1}^p R_s(\sigma,\tau).
}
On the other hand,
\begin{align*}
\text{RHS of \eqref{ug336}} 
&\stackrefpp{cov_a}{cov_b,73hb}{=} N\xi_N\big(\vc R(\sigma,\tau)\big) \\
&\stackref{cov_b,73hb}{=}N\sum_{\text{distinct $s_1,\dots,s_p$}}\frac{p^{p+1}}{p!}\cdot\Big(\frac{1}{p}\Big)^p\cdot R_{s_1}(\sigma,\tau)\cdots R_{s_p}(\sigma,\tau) \\
&\stackrefp{cov_b,73hb}{=}Np\prod_{s=1}^p R_s(\sigma,\tau).
\end{align*}
Comparing the two previous displays, we obtain \eqref{ug336}.
\end{proof}

\begin{proof}[Proof of Corollary~\ref{cor_inj_norm}]
Fix $p\ge3$ and adopt the setting of Lemma~\ref{lem_translate}: $N = pd$, $\SSS=\{1,\dots,p\}$, $\lambda_s = \lambda_{s,N} = \frac{1}{p}$ for each $s$, and $\vc\Delta$ is given by \eqref{73hb}.
For any $\eps>0$, the distributional equality \eqref{38gcb} implies
\eeq{ \label{hs8v3}
\P\Big(\frac{\|T\|_{\inj}}{\sqrt{d}} < \sqrt{p}E_0(p) - \eps\Big)
= \P\Big(\max_{\sigma\in\Sigma_{N,\sph}}\frac{H_N(\sigma)}{N} < \sqrt{p}E_0(p) - \eps\Big).
}
It is easy to see that the balanced condition \eqref{balanced} holds: use either Example~\ref{ex_model}\ref{ex_model_1} or Example~\ref{ex_model}\ref{ex_model_3}.
Therefore, we may apply Theorem~\ref{thm_main}, which gives
\eq{
\liminf_{N\to\infty}\GSE_{N,\sph}(\vc\Delta,\vc\lambda)
\ge \lim_{N\to\infty}\GSE_{N,\sph}^\single(\bbeta),
}
where $\bbeta$ is given by \eqref{beta_def}.
Because this $\bbeta$ is pure $p$-spin (i.e.\ $\beta_q = 0$ for all $q\neq p$) with $\beta_p^2 = \xi(1,\dots,1) \stackref{37gcb}{=} p$, the limit statement \eqref{3r8gx} guarantees
\eq{
\lim_{N\to\infty}\GSE_{N,\sph}^\single(\bbeta)
= \sqrt{p}E_0(p).
}
Combining the two previous displays, we obtain the following inequality for all large $N$:
\eeq{ \label{hs8v4}
&\P\Big(\max_{\sigma\in\Sigma_{N,\sph}}\frac{H_N(\sigma)}{N} < \sqrt{p}E_0(p) - \eps\Big) \\
&\le
\P\Big(\max_{\sigma\in\Sigma_{N,\sph}}\frac{H_N(\sigma)}{N} < \GSE_{N,\sph}(\vc\Delta,\vc\lambda) - \frac{\eps}{2}\Big)
\stackref{borell_tis}{\le}2\exp\Big(\frac{-N(\eps/2)^2}{2p}\Big).
}
Together, \eqref{hs8v3} and \eqref{hs8v4} imply
\begin{align*}
\limsup_{d\to\infty}\frac{1}{d} \log\P\Big(\frac{\|T\|_{\inj}}{\sqrt{d}} < \sqrt{p}E_0(p) - \eps\Big) 
&\le -\frac{\eps^2}{8}.
\intertext{On the other hand, it was shown in \cite[Theorem~1.1]{dartois_mckenna24_arxiv} that}
\limsup_{d\to\infty}\frac{1}{d}\log\P\Big(\frac{\|T\|_{\inj}}{\sqrt{d}} > \sqrt{p}E_0(p) + \eps\Big) 
&< 0.
\end{align*}
In light of the two previous displays, it follows from Borel--Cantelli (and sending $\eps\searrow0$) that
\[
\lim_{d\to\infty}\frac{\|T\|_\inj}{\sqrt{d}} = \sqrt{p}E_0(p)
\quad \mathrm{a.s.}
\qedhere
\]
\end{proof}

\section{Acknowledgments}
We thank Benjamin McKenna for bringing the work~\cite{dartois_mckenna24_arxiv} to our attention, which inspired Corollary~\ref{cor_inj_norm}. 
We are grateful to him and Stephane Dartois for discussion of their upcoming work \cite{dartois_mckenna25_prep} mentioned in Remark~\ref{rmk:DM25}, as well as several very helpful comments on a preliminary draft. We thank Victor Issa for explaining the connections with \cite{issa24_arxiv}, which are discussed in Remark~\ref{rem_compare} and highlighted elsewhere throughout the paper.
We thank Antonio Auffinger, Wei-Kuo Chen, Jean-Christophe Mourrat, and Dmitry Panchenko for discussions about the interpolation method of Proposition~\ref{prop_key}.
E.B. was partially supported by National Science Foundation grant DMS-2412473.

\bibliographystyle{myacm}
\bibliography{erikbib}{}

\end{document}